\renewcommand{\eta}{\upeta}
\renewcommand{\epsilon}{\upepsilon}
\newcommand{\MM}{\mathcal{M}}
\renewcommand{\chi}{\upchi}
\renewcommand{\le}{\leqslant}
\renewcommand{\ge}{\geqslant}
\renewcommand{\setminus}{\smallsetminus}
\renewcommand{\gamma}{\upgamma}
\renewcommand{\lambda}{\uplambda}
\renewcommand{\alpha}{\upalpha}
\renewcommand{\delta}{\updelta}
\renewcommand{\beta}{\upbeta}
\renewcommand{\omega}{\upomega}
\renewcommand{\nu}{\upnu}
\renewcommand{\mu}{\upmu}
\renewcommand{\psi}{\uppsi}
\renewcommand{\phi}{\upphi}
\renewcommand{\rho}{\uprho}
\renewcommand{\kappa}{\upkappa}
\renewcommand{\tau}{\uptau}
\newcommand{\Id}{\mathsf{Id}}
\newcommand{\ud}[0]{\,\mathrm{d}}
\newcommand{\spn}{\mathrm{\bf span}}
\def\moverlay{\mathpalette\mov@rlay}
\def\mov@rlay#1#2{\leavevmode\vtop{%
   \baselineskip\z@skip \lineskiplimit-\maxdimen
   \ialign{\hfil$\m@th#1##$\hfil\cr#2\crcr}}}
\newcommand{\charfusion}[3][\mathord]{
    #1{\ifx#1\mathop\vphantom{#2}\fi
        \mathpalette\mov@rlay{#2\cr#3}
      }
    \ifx#1\mathop\expandafter\displaylimits\fi}
\newcommand{\proj}{\mathsf{Proj}}
\newcommand{\NN}{\mathcal{N}}
\newcommand{\kk}{\mathsf{k}}
\renewcommand{\det}{\mathrm{\bf det}}
\newcommand{\sub}{\mathscr{C}}
\renewcommand{\d}{\delta}
\newcommand{\e}{\varepsilon}
\newcommand{\R}{\mathbb R}
\newcommand{\cc}{\mathsf{c}}
\newtheorem{theorem}{Theorem}
\newtheorem{lemma}[theorem]{Lemma}
\newtheorem{fact}[theorem]{Fact}
\theoremstyle{remark}
\newtheorem{remark}[theorem]{Remark}
\renewcommand{\pi}{\uppi}
\renewcommand{\zeta}{\upzeta}
\newcommand{\qs}{\mathsf{qs}}
\renewcommand{\S}{\mathsf{S}}
\renewcommand{\subset}{\subseteq}
\renewcommand{\supset}{\supseteq}
\newcommand{\C}{\mathbb C}
\DeclareMathOperator{\trace}{\bf Tr}
\newcommand{\E}{\mathbb{E}}
\renewcommand{\sigma}{\upsigma}
\newcommand{\N}{\mathbb N}
\newcommand{\Z}{\mathbb Z}
\newcommand{\eqdef}{\stackrel{\mathrm{def}}{=}}
\begin{document}

\title{Impossibility of dimension reduction  in the nuclear norm}

%\begin{titlepage}

\author{Assaf Naor}
%\address{Mathematics Department\\ Princeton University}
\address{Mathematics Department, Princeton University, Princeton, New Jersey 08544-1000, USA.}
\email{naor@math.princeton.edu}
%\thanks{Supported in part by the BSF, the Packard Foundation and the Simons Foundation.}

\author{Gilles Pisier}
\address {Department of Mathematics, Texas A\&M University, College Station, Texas 77843, USA.}
\email{pisier@math.tamu.edu}

\author{Gideon Schechtman}
\address {Department of Mathematics,  Weizmann Institute of Science, Rehovot 76100, Israel.}
\email{gideon@weizmann.ac.il}

\thanks{A.~N. was supported by the BSF, the NSF, the Packard Foundation and the Simons Foundation. G.~S. was supported by the ISF. The research that is presented here was conducted under the auspices of the Simons Algorithms and Geometry (A\&G) Think Tank. A conference version of this article will appear in the proceedings of the 29th annual ACM--SIAM Symposium on Discrete Algorithms (SODA 2018).}

\date{\today}
\keywords{Dimension reduction, Metric embeddings, Nuclear norm, Schatten--von Neumann classes, Lipschitz quotients, Markov convexity.}
\subjclass[2010]{30L05, 46B85, 46B20, 46B80}

%\date{\today}
%\keywords{Lipschitz extension}
\maketitle

%\vspace{-0.25in}

\begin{abstract}
Let $\S_1$ (the Schatten--von Neumann trace class) denote the Banach space of all compact linear operators $T:\ell_2\to \ell_2$ whose nuclear norm $\|T\|_{\S_1}=\sum_{j=1}^\infty\sigma_j(T)$ is finite, where $\{\sigma_j(T)\}_{j=1}^\infty$ are the singular values of $T$.  We prove that for arbitrarily large $n\in \N$ there exists a subset  $\sub\subset \S_1$ with $|\sub|=n$ that cannot be embedded  with bi-Lipschitz distortion $O(1)$ into any $n^{o(1)}$-dimensional linear subspace of $\S_1$.  $\sub$ is not even a $O(1)$-Lipschitz quotient of any subset of any $n^{o(1)}$-dimensional linear subspace of $\S_1$. Thus, $\S_1$ does not admit a dimension reduction result \'a la Johnson and Lindenstrauss (1984), which complements the work of Harrow,  Montanaro and Short (2011) on the limitations of quantum dimension reduction under the assumption that the embedding into low dimensions is a quantum channel. Such a statement was previously known with $\S_1$ replaced by the Banach space $\ell_1$ of absolutely summable sequences  via the work of Brinkman and Charikar (2003). In fact, the above set $\sub$ can be taken to be the same set as the one that Brinkman and Charikar considered, viewed as a collection of diagonal matrices in $\S_1$. The challenge is to demonstrate that $\sub$ cannot be faithfully realized in an arbitrary low-dimensional subspace of $\S_1$, while Brinkman and Charikar obtained such an assertion only for subspaces of $\S_1$ that consist of diagonal operators (i.e., subspaces of $\ell_1$). We establish this by proving that the Markov 2-convexity constant of any finite dimensional linear subspace $X$ of $\S_1$ is at most a universal constant multiple of $\sqrt{\log \dim(X)}$.
\end{abstract}

%\thispagestyle{empty}
%\end{titlepage}

\newpage

\section{Introduction}

The (bi-Lipschitz) distortion of a metric space $(\MM,d_\MM)$ in a metric space $(\NN,d_\NN)$, which is a numerical quantity that is commonly denoted~\cite{LLR94} by $\cc_{(\NN,d_\NN)}(\MM,d_\MM)$ or simply $\cc_\NN(\MM)$ if the metrics are clear from the context, is the infimum over those $\alpha\in [1,\infty]$ for which there exists (an embedding) $f:\MM\to \NN$ and (a scaling factor) $\lambda\in (0,\infty)$ such that
\begin{equation}\label{eq:def distortion}
\forall\, x,y\in \MM,\qquad \lambda d_\NN\big(f(x),f(y)\big) \le d_\MM(x,y)\le \alpha\lambda d_\NN\big(f(x),f(y)\big).
\end{equation}
When~\eqref{eq:def distortion} occurs one says that $(\MM,d_\MM)$ embeds with (bi-Lipschitz) distortion $\alpha$ into $(\NN,d_\NN)$.

Following~\cite{Nao17-ICM}, a Banach space $(X,\|\cdot\|_X)$ is said to {\em admit metric dimension reduction} if for every $n\in \N$, every subset $\sub\subset X$ of size $n$ embeds with distortion $O_X(1)$ into some linear subspace of $X$ of dimension $n^{o_X(1)}$. Formally, given $\alpha\in [1,\infty)$ and $n\in \N$, denote by  $\mathsf{k}_n^\alpha(X)$  the smallest $k\in \N$ such that for every $\sub\subset X$ with $|\sub|=n$ there exists a $k$-dimensional linear subspace $F=F_\sub$ of $X$ into which $\sub$ embeds with distortion $\alpha$. Using this  notation, the above terminology can be rephrased to say that $X$ admits metric dimension reduction if there exists $\alpha\in [1,\infty)$ for which
\begin{equation}\label{eq:admits reduction}
\lim_{n\to \infty}  \frac{\log \mathsf{k}_{n}^\alpha(X)}{\log n}=0.
\end{equation}
The reason why  the specific asymptotic behavior in~\eqref{eq:admits reduction} is singled out here is that, based on previous works some of which are described below, it is a recurring bottleneck in several cases of interest. Also, such behavior is what would be needed in relation\footnote{Formally, for the purpose of efficient approximate nearest neighbor search one cannot use a dimension reduction statement like~\eqref{eq:admits reduction} as a ``black box" without additional information about the low-dimensional embedding itself rather than its mere existence. One would want the embedding to be fast to compute and ``data oblivious," as in the classical Johnson--Lindenstrauss lemma~\cite{JL84}. There is no need to give a precise formulation here because the present article is devoted to ruling out any low-dimensional low-distortion embedding whatsoever.} to the existence of a nontrivial data structure for approximate nearest neighbor search in $X$, due to the  forthcoming work~\cite{ANNRW17}.

By fixing any $x_0\in \sub$ and considering $F=\spn(\sub-x_0)\subset X$ we  have $\kk_n^1(X)\le n-1$. So, the  pertinent question is to obtain a bound on $\mathsf{k}_{n}^\alpha(X)$  that is significantly smaller than $n$. This natural question turns out to be an elusive longstanding goal for all but a few of the  classical Banach spaces.

  If $X$ is a Hilbert space, then~\eqref{eq:def distortion} holds true, i.e., $\ell_2$ admits metric dimension reduction. In fact, the influential Johnson--Lindenstrauss lemma~\cite{JL84} asserts the stronger bound\footnote{We shall use throughout this article the following (standard) asymptotic notation. Given two quantities $Q,Q'>0$, the notations
$Q\lesssim Q'$ and $Q'\gtrsim Q$ mean that $Q\le \mathsf{K}Q'$ for some
universal constant $\mathsf{K}>0$. The notation $Q\asymp Q'$
stands for $(Q\lesssim Q') \wedge  (Q'\lesssim Q)$. If  we need to allow for dependence on certain parameters, we indicate this by subscripts. For example, in the presence of an auxiliary parameter $\psi$, the notation $Q\lesssim_\psi Q'$ means that $Q\le c(\psi)Q' $, where $c(\psi) >0$ is allowed to depend only on $\psi$, and similarly for the notations $Q\gtrsim_\psi Q'$ and $Q\asymp_\psi Q'$.}
\begin{equation}\label{eq:quote JL}
\forall\, \alpha\in (1,\infty),\qquad \kk_{n}^\alpha(\ell_2)\lesssim_\alpha \log n.
\end{equation}
See~\cite{JL84,Alo03,LN17} and~\cite{Art02,IN07}  for the implicit dependence on $\alpha$ in~\eqref{eq:quote JL} as $\alpha\to 1$ and $\alpha\to \infty$, respectively. By~\cite{JN09} there exists a universal constant $\alpha_0\in (0,\infty)$ and a Banach space $X$ that is not isomorphic to a Hilbert space for which $\kk_{n}^{\alpha_0}(X)\lesssim \log n$. In other words, there  exist non-Hilbertian Banach spaces that admit metric dimension reduction (even with a stronger logarithmic guarantee).

By~\cite{JLS87}  there is a universal constant $C\in (0,\infty)$ for which $\kk_{n}^\alpha(\ell_\infty)\le n^{C/\alpha}$ (see~\cite{Mat92,Mat96} for simplifications and improvements). By~\cite{Mat96} this  is sharp up to the  value of $C$ (see also~\cite{Nao17} for a stronger statement and a different proof). Therefore, $\ell_\infty$ does not admit metric dimension reduction.

The case  $X=\ell_1$  is especially important from the perspectives  of both pure mathematics and algorithms. Nevertheless, it required substantial effort to even show that, say, one has $\kk_{n}^\alpha(\ell_1)\le n/2$ for some universal constant $\alpha$: This is achieved in the forthcoming work~\cite{ANN17} which obtains the estimate $\kk_{n}^\alpha(\ell_1)\lesssim n/\alpha$. The question whether $\ell_1$ admits metric dimension reduction was open for many years, until it was resolved negatively in~\cite{BC03} by showing that there exists a universal constant $c\in (0,1)$ such that $\kk_{n}^\alpha(\ell_1)\ge n^{c/\alpha^2}$. Indeed, let $\sub \subset \ell_1$ be the finite subset that is considered in~\cite{BC03} and suppose that $F\subset \ell_1$ is a $k$-dimensional linear subspace into which $\sub$ embeds with distortion $\alpha$. By~\cite{Tal90} (the earlier estimates of~\cite{Sch87,BLM89} suffice here), $F$ embeds with distortion $O(1)$ into $\ell_1^{O(k\log k)}$.  Hence, $\sub$ embeds with distortion $O(\alpha)$ into $\ell_1^{O(k\log k)}$. This implies that $k\log k\gtrsim |\sub|^{\gamma/\alpha^2}$ for some universal constant $\gamma>0$ by the main result of~\cite{BC03}, which gives the stated lower bound on $\kk_{n}^\alpha(\ell_1)$.

 Remarkably, despite major efforts over the past three decades, the above quoted results are the entirety of what is known about metric dimension reduction in Banach spaces  in terms of the size of the point set; in particular, no nontrivial upper or lower bounds on $\kk_n^\alpha(X)$ are currently known when $X=\ell_p$ for any $p\in (1,2)\cup(2,\infty)$. The purpose of the present article  is to increase the repertoire of classical Banach spaces which fail to admit metric dimensionality reduction by one more space. Specifically, we will demonstrate that this is so for the Schatten--von Neumann trace class $\S_1$.

$\S_1$ consists of those linear operators $T:\ell_2\to \ell_2$ for which $\|T\|_{\S_1}\eqdef \sum_{j=1}^\infty\sigma_j(T)<\infty$, where $\{\sigma_j(T)\}_{j=1}$  are the singular values of $T$; see Section~\ref{sec:main} below for background (in particular, $\|\cdot\|_{\S_1}$ is a norm~\cite{vN37} which is sometimes called the {\em nuclear norm}\footnote{Those who prefer to consider the nuclear norm on $m\times m$ matrices can do so throughout, since all of our results are equivalent to their matricial counterparts; see Lemma~\ref{lem:discretization} below for a formulation of this (straightforward) statement.}). Our main result is the following theorem.

\begin{theorem}\label{thm:main intro}
There is a universal constant $c>0$ such that $\kk_{n}^\alpha(\S_1)\ge n^{c/\alpha^2}$ for all $n\in \N$ and $\alpha\ge 1$.
\end{theorem}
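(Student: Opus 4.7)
The strategy is to use Markov $2$-convexity, a bi-Lipschitz invariant of metric spaces introduced by Lee--Naor--Peres. Write $\Pi_2(\MM)$ for the Markov $2$-convexity constant of a metric space $\MM$. The key property is monotonicity: if $\MM$ embeds into $\NN$ with distortion $\alpha$, then $\Pi_2(\MM) \le \alpha\, \Pi_2(\NN)$, and the same holds when $\MM$ is a Lipschitz quotient of a subset of $\NN$. The plan unfolds in three steps: (i) exhibit a bad set $\sub \subset \S_1$ of size $n$ with $\Pi_2(\sub) \gtrsim \sqrt{\log n}$; (ii) prove the technical bound $\Pi_2(X) \lesssim \sqrt{\log \dim X}$ for every finite-dimensional linear subspace $X$ of $\S_1$; (iii) combine (i) and (ii) to deduce $k \ge n^{c/\alpha^2}$.

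For step (i), take $\sub$ to be the Laakso (or diamond) graph of depth $t$, realized as diagonal matrices through the isometric inclusion $\ell_1 \hookrightarrow \S_1$ --- this is essentially the Brinkman--Charikar set, as the abstract anticipates. The Laakso-graph calculation of Lee--Naor--Peres yields $|\sub|\asymp 4^t$ and $\Pi_2(\sub)\gtrsim\sqrt{t}\asymp\sqrt{\log|\sub|}$, and this transfers verbatim from $\ell_1$ to $\S_1$ because Markov convexity depends only on the ambient metric.

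Step (ii) is the main contribution and the principal obstacle. The natural starting point is that the Schatten class $\S_p$ is $2$-uniformly convex for $1<p\le 2$ with constant of order $1/\sqrt{p-1}$ (Tomczak--Jaegermann; Ball--Carlen--Lieb), so $\Pi_2(\S_p)\lesssim 1/\sqrt{p-1}$. Choosing $p-1\asymp 1/\log k$ gives $\Pi_2(\S_p)\lesssim\sqrt{\log k}$, which would suffice provided every $k$-dimensional subspace of $\S_1$ embedded into $\S_p$ with $O(1)$ distortion for this $p$. The snag is that $\|T\|_{\S_1}\le(\mathrm{rank}\,T)^{1-1/p}\|T\|_{\S_p}$, so the natural distortion bound is governed by the maximal rank of an element of $X$ rather than by $\dim X$, and a $k$-dimensional subspace of $\S_1$ can easily consist of arbitrarily high-rank operators. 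The hard part is to circumvent this rank obstruction; plausible routes include producing a well-conditioned basis of $X$ made of rank-$O(k)$ operators (so the trivial rank-bound becomes useful), running the Markov $2$-convexity inequality directly on $X$ via noncommutative tools such as operator-valued Khintchine or complex interpolation between $\S_1$ and $\S_2$, or constructing an auxiliary $2$-uniformly convex norm with convexity constant $O(\sqrt{\log k})$ that is $O(1)$-equivalent to $\|\cdot\|_{\S_1}$ on $X$.

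Granted step (ii), step (iii) is immediate: any distortion-$\alpha$ embedding of a set $\sub$ of size $n$ into a $k$-dimensional subspace $X\subset \S_1$ yields
\[
\sqrt{\log n}\;\lesssim\;\Pi_2(\sub)\;\le\;\alpha\,\Pi_2(X)\;\lesssim\;\alpha\sqrt{\log k},
\]
hence $k\ge n^{c/\alpha^2}$. The same chain also handles the Lipschitz-quotient strengthening mentioned in the abstract, since $\Pi_2$ is monotone under Lipschitz quotients of subsets.
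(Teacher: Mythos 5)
Your high-level strategy is exactly the one the paper follows: Markov $2$-convexity as the quotient-stable invariant, the Laakso/diamond point sets sitting in $\ell_1\subset\S_1$ with $\Pi_2\gtrsim\sqrt{\log n}$, the uniform convexity bound $\Pi_2(\S_q)\lesssim 1/\sqrt{q-1}$, and the choice $q-1\asymp 1/\log\dim(X)$. Steps (i) and (iii) are fine. But step (ii) --- the assertion that every $k$-dimensional linear subspace $X$ of $\S_1$ embeds into $\S_q$ with distortion $O(1)$ for $q=1+1/\log k$, equivalently with distortion $\dim(X)^{1-1/q}$ for general $q$ --- is not proved in your write-up; it is only flagged as ``the hard part,'' with three speculative routes. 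That step is the entire mathematical content of the theorem (everything else is assembly of known results), so as it stands the proposal has a genuine gap rather than a proof. You correctly diagnose why the naive rank bound fails, but none of your three candidate routes is carried out, and the first two are not what works: there is no reason a general $k$-dimensional subspace of $\S_1$ admits a well-conditioned low-rank basis, and no direct interpolation or Khintchine argument is known that runs the Markov convexity inequality on $X$ itself.

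What actually closes the gap is a noncommutative change-of-density (Lewis-type) argument, which is closest in spirit to your third route. After truncating to $\S_1^m$, one produces by a variational/Lagrange-multiplier argument a basis $T_1,\dots,T_k$ of $X$ such that, setting $M=\sum_{i=1}^k T_i^*T_i$, one has $\trace\bigl[\tfrac12(T_i^*T_j+T_j^*T_i)M^{-1/2}\bigr]=\delta_{ij}$. The linear map $A\mapsto AM^{(q-1)/(2q)}$ then embeds $X$ into $\S_q^m$ with distortion $k^{1-1/q}$: the lower bound is Schatten--H\"older against $\|M^{(q-1)/(2q)}\|_{\S_{q/(q-1)}}=k^{1-1/q}$ (using $\trace[M^{1/2}]=k$), and the upper bound requires controlling $\|(A^*A)^{(q-1)/(2q)}M^{-(q-1)/(2q)}\|_{\S_\infty}$, which rests on the operator inequality $A^*A\le(\sum_i a_i^2)M$ for $A=\sum_i a_iT_i$, L\"owner's theorem on operator monotonicity of $u\mapsto u^{2\beta}$ for $\beta\le\tfrac12$, and a bootstrap step exploiting the normalization of the basis. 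Without this (or some substitute of comparable depth), the chain $\sqrt{\log n}\lesssim\alpha\sqrt{\log k}$ is not established.
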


Since the publication of~\cite{BC03}, there was an obvious candidate for an $n$-point subset of $\S_1$ that could potentially exhibit the failure of metric dimensionality reduction in $\S_1$. Namely, since $\ell_1$ is the subspace of diagonal operators in $\S_1$, one could  consider the same subset as the one that was used  in~\cite{BC03} to rule out metric dimensionality reduction in $\ell_1$ (see Figure~\ref{fig:diamond-laakso} below). The main result of~\cite{BC03} states that this subset does not well-embed into any low-dimensional subspace of $\S_1$ all of whose elements are diagonal operators. So, the challenge amounted to strengthening this assertion so as to apply to low-dimensional subspaces of $\S_1$ whose elements can be any operator whatsoever.\footnote{One should note here that $\S_1$ does not admit a bi-Lipschitz embedding into any $L_1(\mu)$ space, as follows by combining the corresponding linear result of~\cite{McC67,Pis78} with a classical differentiation argument~\cite{BL00}, or directly by using a bi-Lipschitz invariant that is introduced in the forthcoming work~\cite{NS17-alpha}.}

This is exactly what Theorem~\ref{thm:main intro} achieves, i.e., its contribution is not a construction of a new example but rather proving that the natural guess indeed works. The analogue in $\S_1$ of the fact  that any finite-dimensional subspace of $\ell_1$ well-embeds into $\ell_1^m$ with $m=\dim(X)^{O(1)}$  is not known (see Section~\ref{sec:comments} below for more on this). Our proof of Theorem~\ref{thm:main intro} circumvents this problem about the linear structure of $\S_1$ by taking a different route. As we shall soon explain, this proof actually yields a stronger geometric conclusion (which is new even for dimension reduction in $\ell_1$) that does not follow from the approaches that were used in the literature~\cite{BC03,LN04,ACNNH11,Reg13}  to treat the $\ell_1$ setting.

$\S_1$ is of immense importance to mathematics, statistics and physics; it would be unrealistically ambitious to attempt to describe this here, but we shall now briefly indicate some of the multifaceted uses of $\S_1$ in combinatorics and computer science. The nuclear norm of the adjacency matrix of a  graph is also called~\cite{Gut78} its {\em graph energy}; see e.g.~the article~\cite{Nik07}, the monograph~\cite{LSG12} and the references therein for many applications which naturally give rise to a variety of algorithmic issues involving nuclear norm computations. The nuclear norm arises in many optimization scenarios, ranging from notable work~\cite{CR09,CT10,Rec11} on matrix completion and other  non-convex  optimization problems in matrix analysis and numerical linear algebra (e.g.~\cite{RFP10,DTV11}), differential privacy (e.g.~\cite{HLM12,LM13}), machine learning (e.g.~\cite{HDPDM12}), signal processing (e.g.~\cite{BSZ16}), computer vision (e.g.~\cite{GZZF14,GXMZFZ17}), sketching and data streams (e.g.~\cite{LW16,LW17}), and quantum computing (e.g.~\cite{Win04,HMS11}). In terms of direct relevance to dimension reduction, a natural question would be that of approximate nearest neighbor search in $\S_1$. This was posed explicitly in~\cite{And10} but resisted attempts to devise nontrivial data structures until the forthcoming work~\cite{ANNRW17}. The above cited work~\cite{HMS11} on quantum computing is  a direct precursor to the present article. Specifically, in~\cite{HMS11} the notion of quantum dimension reduction was introduced with the additional requirement that the nuclear norm-preserving embedding into low-dimensions is a quantum channel, and a strong impossibility result was obtained under this assumption (the additional structural information on the embedding makes the older approach of~\cite{CS02} applicable). Theorem~\ref{thm:main intro} (and even more so Theorem~\ref{thm:qs intro} below) complements this investigation by ruling out any sufficiently faithful low-dimensional embedding without any further restriction on its structure.

\subsection{Quotients of subsets} In what follows, the closed ball of radius $r\in [0,\infty)$ centered at a point $x$ of a metric space $(\MM,d_\MM)$ will be denoted $B_\MM(x,r)=\{y\in \MM:\ d_\MM(x,y)\le r\}$. Fix $\alpha\in [1,\infty)$. Following~\cite{Why58,Jam90,Gro99,BJLPS99}, a metric space $(\MM,d_\MM)$ is said to be an $\alpha$-Lipschitz quotient of a metric space $(\NN,d_\NN)$ if there is an {\em onto} mapping $\phi:\NN\twoheadrightarrow  \MM$ and (a scaling factor) $\lambda\in (0,\infty)$ such that
\begin{equation}\label{eq:def lip quo}
\forall\, x\in \NN,\ \forall r\in (0,\infty),\qquad B_\MM(\phi(x),\lambda r)\subset \phi\big(B_\NN(x,r)\big)\subset B_\MM(\phi(x),\alpha\lambda r).
\end{equation}
The second inclusion in~\eqref{eq:def lip quo} is just a rephrasing of the requirement that $\phi$ is Lipschitz, and the first inclusion in~\eqref{eq:def lip quo} means that $\phi$ is ``Lipschitzly open.'' For Banach spaces (and linear mappings), this definition is the dual of the bi-Lipschitz embedding requirement, i.e., given two Banach space $(X,\|\cdot\|_X), (Y,\|\cdot\|_Y)$, a linear mapping $T:X\to Y$ has distortion $\alpha$, i.e., $\lambda \|x\|_X\le \|Tx\|_Y\le \alpha\lambda \|x\|_X$ for all $x\in X$ and some $\lambda>0$, if and only if its adjoint $T^*:Y^*\to X^*$ is an $\alpha$-Lipschitz quotient. For general metric spaces, in lieu of duality one directly defines Lipschitz quotients as above.

In accordance with the Ribe program~\cite{Nao12,Bal13}, following insights from Banach space theory a natural way to weaken the notion of bi-Lipschitz embedding into a metric space $(\NN,d_\NN)$ is to study those metric spaces that are a Lipschitz quotient of a subset of $\NN$. Quantitatively, given a metric space $(\MM,d_\MM)$, denote by $\qs_\NN(\MM)$ the infimum over those $\alpha\in [1,\infty]$ for which there exists a subset $\mathcal{S}\subset \NN$ such that $(\MM,d_\MM)$ is an $\alpha$-Lipschitz quotient of $(\mathcal{S},d_\NN)$. The geometric meaning of this concept is elucidated via the following reformulation.  Given two nonempty subsets $U,V\subset \NN$, denote their minimal distance and Hausdorff distance, respectively, as follows.
$$
d_\NN(U,V)\eqdef \inf_{\substack{u\in U\\ v\in V}} d_\NN(u,v)\qquad\mathrm{and}\qquad \mathscr{H}_\NN(U,V)\eqdef \max\left\{\sup_{u\in U}\inf_{v\in V} d_\NN(u,v),\sup_{v\in V}\inf_{u\in U} d_\NN(u,v)\right\}.
$$
The following fact is straightforward to check directly from the definitions; see~\cite[Lemma~6.1]{MN04}.

\begin{fact}\label{fact:hausdorf}
Let $(\MM,d_\MM)$ and $(\NN,d_\NN)$ be metric spaces. The quantity $\qs_\NN(\MM)$ is equal to the infimum over those $\alpha\in [1,\infty]$ that satisfy the following property. One can assign to every $x\in \MM$ a nonempty subset $\sub_x$ of $\NN$ such that there exists (a scaling factor) $\lambda\in (0,\infty)$ for which
\begin{equation}\label{eq:hausdorff formulation}
\forall\, x,y\in \MM,\qquad \lambda \mathscr{H}_\NN(\sub_x,\sub_y)\le  d_\MM(x,y)\le \alpha \lambda d_\NN(\sub_x,\sub_y).
\end{equation}
\end{fact}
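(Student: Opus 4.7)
The plan is to establish the two inequalities between infima via the natural correspondence between a Lipschitz quotient map $\phi:\mathcal S\twoheadrightarrow\MM$ and its family of fibers $\sub_x\eqdef\phi^{-1}(x)\subset\NN$.

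For the first direction, suppose $(\MM,d_\MM)$ is an $\alpha$-Lipschitz quotient of $\mathcal S\subset\NN$ via $\phi$ with scaling $\lambda$, and define $\sub_x\eqdef\phi^{-1}(x)$ for every $x\in\MM$. The right-hand inclusion in~\eqref{eq:def lip quo} asserts that $\phi$ is an $\alpha\lambda$-Lipschitz map, so for any $u\in\sub_x$ and $v\in\sub_y$ we have $d_\MM(x,y)=d_\MM(\phi(u),\phi(v))\le\alpha\lambda\,d_\NN(u,v)$, and taking the infimum over such $u,v$ produces the required upper bound $d_\MM(x,y)\le\alpha\lambda\,d_\NN(\sub_x,\sub_y)$. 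For the Hausdorff bound, fix $u\in\sub_x$ and any $r>d_\MM(x,y)/\lambda$; since $y\in B_\MM(\phi(u),\lambda r)$, the left-hand inclusion in~\eqref{eq:def lip quo} supplies $v\in\sub_y$ with $d_\NN(u,v)\le r$. Letting $r\downarrow d_\MM(x,y)/\lambda$, taking the supremum over $u\in\sub_x$, and invoking the symmetric argument yields $\lambda\mathscr H_\NN(\sub_x,\sub_y)\le d_\MM(x,y)$.

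For the converse, suppose a family $\{\sub_x\}_{x\in\MM}$ satisfies~\eqref{eq:hausdorff formulation} with constant $\alpha$ and scaling $\lambda$. Set $\mathcal S\eqdef\bigcup_{x\in\MM}\sub_x$ and define $\phi:\mathcal S\twoheadrightarrow\MM$ by $\phi(u)\eqdef x$ whenever $u\in\sub_x$. The right inequality in~\eqref{eq:hausdorff formulation} makes the family pairwise disjoint (since $\sub_x\cap\sub_y\ne\emptyset$ would imply $d_\NN(\sub_x,\sub_y)=0$, forcing $x=y$), so $\phi$ is well-defined, surjective, and is readily checked to be $\alpha\lambda$-Lipschitz, which verifies the right-hand inclusion of~\eqref{eq:def lip quo}. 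The main (though routine) obstacle is that $\mathscr H_\NN(\sub_x,\sub_y)$ is defined via an infimum which need not be attained, and this precludes obtaining the left-hand inclusion of~\eqref{eq:def lip quo} at scaling $\lambda$ itself. To bypass this I would fix $\eta\in(0,1)$, set $\lambda'\eqdef(1-\eta)\lambda$, and for any $u\in\sub_x$, $r>0$, and $y\in B_\MM(x,\lambda' r)$ use the left inequality in~\eqref{eq:hausdorff formulation} to deduce $\mathscr H_\NN(\sub_x,\sub_y)\le(1-\eta)r<r$, thereby producing $v\in\sub_y$ with $d_\NN(u,v)\le r$. This exhibits $\phi$ as an $\alpha/(1-\eta)$-Lipschitz quotient with scaling $\lambda'$; sending $\eta\downarrow 0$ dissolves the harmless loss upon taking the infimum over $\alpha$ in the definition of $\qs_\NN(\MM)$, completing the proof.
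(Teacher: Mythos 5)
Your proof is correct, and it is essentially the intended argument: the paper does not prove this Fact itself but defers to~\cite[Lemma~6.1]{MN04}, where the same fiber/union correspondence $\sub_x=\phi^{-1}(x)$ is used. Your handling of the only delicate point --- that $\mathscr{H}_\NN(\sub_x,\sub_y)\le r$ does not by itself produce a point of $\sub_y$ within distance $r$ of a given $u\in\sub_x$, remedied by shrinking the scaling factor to $(1-\eta)\lambda$ and letting $\eta\downarrow 0$ --- is exactly right and is the reason the statement is an equality of infima rather than of attained constants.
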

Clearly $\mathsf{qs}_\NN(\mathcal{M})\le \mathsf{c}_\NN(\mathcal{M})$ because if an embedding $f:\MM\to \NN$ satisfies~\eqref{eq:def distortion}, then by considering the singleton $\sub_x=\{f(x)\}\subset \NN$ for every $x\in \MM$ one obtains a collection of  subsets that satisfies~\eqref{eq:hausdorff formulation}. Hence, the following impossibility result for dimension reduction is stronger than Theorem~\ref{thm:main intro}.

\begin{theorem}\label{thm:qs intro}
There is a universal constant $c>0$ with the following property. For every $n\in \N$ there is an $n$-point subset $\sub \subset \ell_1\subset\S_1$ such that for every $\alpha\ge 1$ and every linear subspace $X$ of $\S_1$,
$$
\qs_X(\sub)\le \alpha \implies \dim(X)\ge n^{c/\alpha^2}.
$$
\end{theorem}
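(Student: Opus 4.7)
The plan is to identify a bi-Lipschitz invariant that (i) is large on the Brinkman--Charikar ``diamond'' point set $\sub$, (ii) is small on every finite-dimensional subspace of $\S_1$ with a quantitatively useful dependence on the dimension, and (iii) descends to $\alpha$-Lipschitz quotients of subsets with only a factor $\alpha$ loss. As foreshadowed in the abstract, the right invariant is the Markov 2-convexity constant $\Pi_2(\cdot)$ of Mendel--Naor. Property (iii) is essentially formal: given a chain on $\MM$, lift it through a Lipschitz-open quotient map to a chain in the hosting subset $\mathcal{S}\subset \NN$ (monotonicity under subsets is tautological), apply the defining Markov 2-convexity inequality in $\NN$, and push the resulting estimate back down; each step costs only a factor of the Lipschitz constant, giving $\Pi_2(\MM)\lesssim \alpha\,\Pi_2(\NN)$.

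For (i), take $\sub$ to be the vertex set of the Brinkman--Charikar series-parallel diamond of depth $k$, equipped with its graph metric and embedded isometrically into $\ell_1$ (hence as diagonal operators into $\S_1$); this set has $|\sub|=n\asymp 4^k$. The Mendel--Naor lower bound for series-parallel graphs then gives $\Pi_2(\sub)\gtrsim \sqrt{k}\asymp \sqrt{\log n}$.

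The main obstacle, and the genuinely new analytic input of the paper, is (ii): the dimension-dependent upper bound
$$
\Pi_2(X)\lesssim \sqrt{\log \dim(X)}
$$
for every finite-dimensional linear subspace $X\subset \S_1$. No dimension-free bound can hold, because $\ell_1$ sits isometrically in $\S_1$ and has infinite Markov 2-convexity constant, so the argument must genuinely exploit finite-dimensionality. The natural route, modelled on the classical real-variable trick of ``moving off the edge of reflexivity,'' is a non-commutative change-of-density step: embed $X$ with distortion $1+O(\e)$ into the Schatten class $\S_{1+\e}$ for $\e\asymp 1/\log \dim(X)$, and then invoke that $\S_p$ with $p\in(1,2]$ is uniformly 2-convex with Ball--Carlen--Lieb (non-commutative Clarkson) constant of order $1/\sqrt{p-1}$, which by a martingale/telescoping argument transfers to $\Pi_2(\S_p)\lesssim 1/\sqrt{p-1}$. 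Choosing $p=1+\e$ balances the two losses and yields the claimed bound. Carrying out this change of density for an \emph{arbitrary} finite-dimensional subspace of $\S_1$, and in particular passing from the \emph{linear} modulus of 2-convexity of $\S_p$ to the \emph{nonlinear} Markov 2-convexity constant with the sharp $1/\sqrt{p-1}$ scaling, is where I expect the technical heart of the argument to lie.

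Granting (i)--(iii), Theorem~\ref{thm:qs intro} is then immediate: if $\qs_X(\sub)\le \alpha$, then chaining the three estimates gives
$$
\sqrt{\log n}\lesssim \Pi_2(\sub)\lesssim \alpha\cdot \Pi_2(X)\lesssim \alpha\sqrt{\log \dim(X)},
$$
which rearranges to $\dim(X)\ge n^{c/\alpha^2}$ for a universal constant $c>0$.
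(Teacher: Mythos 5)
Your proposal follows the paper's proof of Theorem~\ref{thm:qs intro} essentially verbatim: Markov $2$-convexity as the invariant, its monotonicity under subsets and its $\alpha$-lossy behavior under Lipschitz quotients from~\cite{MN08} (yielding~\eqref{quote:MN}), a series-parallel graph family with $\Pi_2\gtrsim\sqrt{\log n}$ pushed into $\ell_1\subset\S_1$ via~\cite{GNRS99}, and the subspace bound $\Pi_2(X)\lesssim\sqrt{\log\dim(X)}$ obtained by embedding $X$ into $\S_q$ with $q=1+1/\log\dim(X)$ and invoking $\Pi_2(\S_q)\lesssim 1/\sqrt{q-1}$; the final chaining is exactly~\eqref{eq:qs lower ratio}.

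Two points need care. First, there is no ``Mendel--Naor lower bound for series-parallel graphs'': a path is series-parallel and has bounded Markov $2$-convexity constant. The bound $\Pi_2\gtrsim\sqrt{k}$ in~\cite{MN08} is proved for the Laakso graphs $\mathsf{L}_k$, which is precisely why the paper uses those rather than the Brinkman--Charikar diamonds $\mathsf{D}_k$ (the diamond version is attributed to the forthcoming~\cite{EMN17}). So either switch your point set to the $\ell_1$-image of $\mathsf{L}_k$, or supply the Markov-convexity lower bound for $\mathsf{D}_k$ yourself; also, the embedding from~\cite{GNRS99} has distortion $O(1)$, not distortion $1$, which is harmless but should be said. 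Second, your item (ii) --- the only genuinely new ingredient of the paper --- is asserted rather than proved. Your sketch (change of density into $\S_{1+\e}$, then uniform $2$-convexity with the Ball--Carlen--Lieb constant $1/\sqrt{q-1}$ transferred to $\Pi_2$ via~\cite{Pis75,LNP06,MN08}) is indeed the paper's route, but the change-of-density step for an \emph{arbitrary} finite-dimensional subspace of $\S_1$ is the content of Theorem~\ref{thm:embed into bigger q}, which rests on the noncommutative Lewis-type basis of Lemma~\ref{lem:lewis} and the operator-monotonicity estimates of Section~\ref{sec:main}; note also that the distortion achieved there is $\dim(X)^{1-1/q}=O(1)$ for this choice of $q$, not $1+O(\e)$ as you wrote (again harmless for the conclusion).
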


\subsection{Markov convexity}\label{sec:markov} The subtlety of proving results such as Theorem~\ref{thm:qs intro}, i.e., those  that provide limitations on the structure of subsets of quotients, is that one needs to somehow argue that no representation of $\MM$ using arbitrary subsets of $\NN$ can satisfy~\eqref{eq:hausdorff formulation}.  Note that $\mathsf{qs}_\NN(\mathcal{M})$ can be much smaller than $\mathsf{c}_\NN(\mathcal{M})$, and qualitatively the class of metric spaces that are Lipschitz quotients of subsets of $(\NN,d_\NN)$ is typically much richer than the class of metric spaces that admit a bi-Lipschitz embedding into $(\NN,d_\NN)$; a striking example of this is Milman's Quotient of Subspace Theorem~\cite{Mil85} that yields markedly stronger guarantees than the classical Dvoretzky theorem~\cite{Dvo60,Mil71} (for the purpose of this comparison, it suffice to consider the earlier work~\cite{Dil84}  that is weaker by a logarithmic factor, or even the bounds on the quotient of subspace problem in~\cite{FLM77}; one could also consider here the nonlinear results on quotients of subsets in~\cite{MN04} in comparison to their ``subset'' counterparts~\cite{BLMN05}).

Our proof of Theorem~\ref{thm:qs intro} (hence also Theorem~\ref{thm:main intro} as a special case) uses the bi-Lipschitz invariant {\em Markov convexity} that was introduced in~\cite{LNP06} and was shown in~\cite{MN08} to be preserved under Lipschitz quotients. Let $\{\chi_t\}_{t\in \Z}$ be a Markov chain on a state space $\Omega$. Given an integer $k\ge 0$, denote by $\{\widetilde{\chi}_t(k)\}_{t\in \Z}$ the process that equals $\chi_t$ for time $t\le k$, and evolves independently of $\chi_t$ (with respect to the same transition probabilities) for time $t>k$. Following~\cite{LNP06}, the Markov 2-convexity constant of a metric space $(\MM,d_\MM)$, denoted $\Pi_2(\MM)$, is the infimum over those $\Pi\in [0,\infty]$ such that for every Markov chain $\{\chi_t\}_{t\in \Z}$ on a state space $\Omega$ and every $f:\Omega\to \MM$ we have
\begin{equation}\label{eq:def markov convexity}
\sum_{k=1}^\infty\sum_{t\in \Z}\frac{1}{2^{2k}}\E\left[d_\MM \big(f\big(\widetilde{\chi}_t(t-2^k)\big),f(\chi_t)\big)^2\right]\le \Pi^2\sum_{t\in \Z} \E\left[d_\MM\big(f(\chi_t),f(\chi_{t-1})\big)^2\right].
\end{equation}

Because~\eqref{eq:def markov convexity} involves only pairwise distances, $\Pi_2(\mathcal{S})\le \Pi_2(\NN)$ for every metric space $(\NN,d_\NN)$ and any $\mathcal{S}\subset \NN$. Also, by~\cite{MN08} if for some $\alpha\in [1,\infty)$ a metric space $(\MM,d_\MM)$ is an $\alpha$-Lipschitz quotient of $(\NN,d_\NN)$, then $\Pi_2(\MM)\le \alpha\Pi_2(\NN)$.  As in~~\cite{MN08}, by combining these facts we see that Markov convexity yields the following obstruction to the existence of Lipschitz quotients from an arbitrary subset of $\NN$ onto $\MM$, or equivalently the existence of a representation of $\MM$ using subsets of $\NN$ as in~\eqref{eq:hausdorff formulation}.
\begin{equation}\label{quote:MN}
\qs_\NN(\MM)\ge \frac{\Pi_2(\MM)}{\Pi_2(\NN)}.
\end{equation}

The following theorem is the key structural contribution of the present article.
\begin{theorem}\label{thm:markov convexity}
Every finite-dimensional linear subspace $X$ of $\S_1$ satisfies $\Pi_2(X)\lesssim \sqrt{\log \dim(X)}$.
\end{theorem}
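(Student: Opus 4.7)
The plan rests on two established ingredients and one main construction.  \emph{First}, the noncommutative Clarkson inequality of Ball, Carlen and Lieb asserts that for $p\in(1,2]$ the Schatten class $\S_p$ has modulus of uniform convexity of power type $2$ with constant of order $1/\sqrt{p-1}$; combined with the result of \cite{MN08} that power type $2$ uniform convexity yields a proportional bound on Markov $2$-convexity, this gives
\[
\Pi_2(\S_p)\;\lesssim\; \frac{1}{\sqrt{p-1}}\qquad\text{for every } p\in(1,2].
\]
\emph{Second}, because Markov $2$-convexity is multiplicative under bi-Lipschitz embeddings (the Markov convexity constant gets multiplied by the distortion), to bound $\Pi_2(X)$ it suffices to produce, for every $p\in(1,2]$, a linear embedding $\Phi_p:X\to \S_p$ of bi-Lipschitz distortion at most $C k^{(p-1)/p}$, where $C$ is a universal constant and $k=\dim X$.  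Granted this we obtain $\Pi_2(X)\lesssim k^{(p-1)/p}/\sqrt{p-1}$, and the choice $p=1+1/\log k$ makes $k^{(p-1)/p}\le k^{1/\log k}=e$ while $1/\sqrt{p-1}=\sqrt{\log k}$, completing the proof.

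The main technical contribution is the construction of $\Phi_p$.  The plan is a noncommutative analog of Lewis's change-of-density argument.  Using Lemma~\ref{lem:discretization} to realize $X$ as a subspace of $\S_1^n$ for some finite $n$, one selects a positive operator $D\in \S_1^n$ with $\mathrm{Tr}(D)=1$ extremizing an appropriate John-type determinantal functional attached to $X$, and sets
\[
\Phi_p(T)\;:=\; D^{-(p-1)/(2p)}\, T\, D^{-(p-1)/(2p)}.
\]
The inequality $\|T\|_{\S_1}\le \|\Phi_p(T)\|_{\S_p}$ is immediate from noncommutative H\"older applied to the factorization $T = D^{(p-1)/(2p)}\,\Phi_p(T)\,D^{(p-1)/(2p)}$, once one notes $\|D^{(p-1)/(2p)}\|_{\S_{2p/(p-1)}}=\mathrm{Tr}(D)^{(p-1)/(2p)}=1$; this direction holds for every $T\in \S_1$ and every density $D$.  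The reverse inequality $\|\Phi_p(T)\|_{\S_p}\lesssim k^{(p-1)/p}\|T\|_{\S_1}$, which need only hold on $X$, is where the specific choice of $D$ enters, via a duality argument that mirrors Lewis's classical scalar argument.  At $p=2$ this recovers John's theorem for the $\S_1$-unit ball of $X$ (distortion $\sqrt{k}=k^{(2-1)/2}$), and the bound at general $p\in(1,2)$ may be viewed as an operator H\"older-type interpolation between the trivial $p=1$ case (the identity, distortion $1$) and the John case at $p=2$.

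The main obstacle is precisely this noncommutative Lewis construction.  In the commutative setting $X\subset \ell_1$ the factorization underlying H\"older's inequality can be carried out pointwise on the measure space, so the analogous step is routine.  In $\S_1$ the ``density'' $D$ does not in general commute with elements of $X$, so the pointwise factorization must be replaced by an operator-algebraic identity using carefully chosen fractional powers of $D$ on both sides of $T$.  The essential point to verify is that the resulting distortion depends only on $k=\dim X$ and not on the auxiliary matricial dimension $n$ from the discretization step---an $n$-independence that is exactly what allows this approach to side-step the missing analog for $\S_1$ of the Bourgain--Lindenstrauss--Milman--Talagrand embedding of $k$-dimensional subspaces of $\ell_1$ into $\ell_1^{O(k\log k)}$.
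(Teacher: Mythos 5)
Your high-level strategy is exactly the one the paper follows: embed $X\subset\S_1$ into $\S_p$ with distortion $\dim(X)^{1-1/p}$ via a noncommutative Lewis change of density, invoke $\Pi_2(\S_p)\lesssim 1/\sqrt{p-1}$ (Ball--Carlen--Lieb uniform convexity combined with the equivalence of martingale cotype $2$ and Markov $2$-convexity), and set $p=1+1/\log\dim(X)$. The reduction to finding $\Phi_p$ with distortion $k^{1-1/p}$, the role of Lemma~\ref{lem:discretization}, and the observation that the bound must be independent of the ambient matrix size $n$ are all correct.

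The gap is that you have not actually constructed $D$ or proved the reverse inequality $\|\Phi_p(T)\|_{\S_p}\lesssim k^{(p-1)/p}\|T\|_{\S_1}$ for $T\in X$; you assert it follows from ``a duality argument that mirrors Lewis's classical scalar argument,'' but this step is precisely the technical heart of the theorem and is where the commutative proof breaks down. In the paper this occupies Lemmas~\ref{lem:1/2}--\ref{lem:upper lewis}: the extremal object is produced by a determinant-maximization yielding a basis $T_1,\dots,T_k$ with $\trace[\frac12(T_i^*T_j+T_j^*T_i)M^{-1/2}]=\d_{ij}$ for $M=\sum_i T_i^*T_i$, and the upper bound on the distortion is obtained \emph{without duality}, by combining the operator inequality $A^*A\le(\sum_i a_i^2)M$ with L\"owner's theorem on operator monotonicity of $u\mapsto u^{2\beta}$ for $\beta\le\frac12$ (Lemma~\ref{lem:1/2}) and a bootstrapping identity (the chain culminating in~\eqref{eq:bootstrap}). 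None of these steps is routine, and a ``duality argument'' in the style of Lewis is exactly what the noncommutativity obstructs (it is also why Tomczak-Jaegermann's and Lewis's original arguments do not cover $p<1$). Two further points to flag: (i) the paper's map is the \emph{one-sided} multiplication $B\mapsto BM^{(q-1)/(2q)}$, not your symmetric two-sided conjugation $D^{-a}TD^{-a}$; in the commutative case these coincide, but in $\S_1$ they are genuinely different maps, and your Hölder factorization for the easy direction works for either, whereas it is not established that the two-sided version admits the required upper bound --- the paper's proof of Lemma~\ref{lem:upper lewis} uses the polar decomposition $A=U\sqrt{A^*A}$ and unitary invariance in a way that is tailored to one-sided multiplication; (ii) $\trace(D)=1$ versus $\trace(M^{1/2})=k$ is only a normalization difference, so the arithmetic of your exponents is consistent, but it does not substitute for the missing estimate.
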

We deduce Theorem~\ref{thm:qs intro} from Theorem~\ref{thm:markov convexity} using another result of~\cite{MN08} which shows that there exists a sequence of connected series-parallel graphs $\{\mathsf{L}_k=(V(\mathsf{L}_k),E(\mathsf{L}_k)\}_{k=1}^\infty$ such that $\log |V(\mathsf{L}_k)|\asymp k$ and $\Pi_2(\mathsf{L}_k)\gtrsim \sqrt{k}$, where $\mathsf{L}_k$ is equipped with its shortest-path metric. The graphs $\{\mathsf{L}_k\}_{k=1}^\infty$ are known as the Laakso graphs~\cite{Laa00,LP01}, and the corresponding metric spaces  are even $O(1)$-doubling (see~\cite{Hei01} for the notion of doubling metric spaces; we do not need to use it here). These are not the same graphs as the ones that were used in~\cite{BC03} (though in~\cite{LMN04} the Laakso graphs were used as another way to rule out metric dimension reduction in $\ell_1$). The graphs of~\cite{BC03} are the diamond graphs $\{\mathsf{D}_k\}_{k=1}^\infty$ (see Figure~\ref{fig:diamond-laakso} for a depiction of $\mathsf{L}_3$ and $\mathsf{D}_3$), which are also series-parallel and one can show that the argument of~\cite{MN08} applies mutatis mutandis to yield the same properties for $\{\mathsf{D}_k\}_{k=1}^\infty$  as those that we stated   above for $\{\mathsf{L}_k\}_{k=1}^\infty$  (this is carried out in the forthcoming work~\cite{EMN17}). In any case, in~\cite{GNRS99} it was shown that any connected  series-parallel graph (equipped with its shortest path metric) embeds into $\ell_1$ (hence also into $\S_1$) with distortion $O(1)$. Let $\sub_k\subset \ell_1\subset \S_1$ be the image of such an embedding of $\mathsf{L}_k$. If $X$ is a finite-dimensional linear subspace of $\S_1$, then by combining  Theorem~\ref{thm:markov convexity} with~\eqref{quote:MN} and the fact~\cite{MN08} that $\Pi_2(\sub_k)\asymp \Pi_2(\mathsf{L}_k)\gtrsim \sqrt{\log |\sub_k|}$, we see that
\begin{equation}\label{eq:qs lower ratio}
\qs_X(\sub_k)\ge \frac{\Pi_2(\sub_k)}{\Pi_2(X)}\gtrsim \frac{\sqrt{\log|\sub_k|}}{\sqrt{\log \dim(X)}}.
\end{equation}
This simplifies to give Theorem~\ref{thm:qs intro}. As we explained above, the same conclusion holds for the images in $\ell_1$ that arise from an application of~\cite{GNRS99} to diamond graphs $\{\mathsf{D}_k\}_{k=1}^\infty$

\subsection{Comments on the proof of Theorem~\ref{thm:markov convexity}}\label{sec:comments} Having explained the ingredients of the proof of Theorem~\ref{thm:main intro}, we shall end this introduction by commenting on the proof of Theorem~\ref{thm:markov convexity}, stating additional consequences of this proof, and discussing limitations of previous methods in this context.

For  $m\in \N$ denote the subspace of $\S_1$ that consists of the $m\times m$ matrices by $\S_1^m$ (to be extra formal, one can think of these matrices as the top left $m\times m$ corner of infinite matrices corresponding to operators on $\ell_2$, with all the entries that are not in that corner vanishing). $\S_1^m$ is a subspace of $\S_1$ of dimension $m^2$, so Theorem~\ref{thm:markov convexity} implies in this special case that $\Pi(\S_1^m)\lesssim \sqrt{\log m}$. However, it is much simpler to prove this for $\S_1^m$ than to prove the full statement of Theorem~\ref{thm:markov convexity} for a general subspace of $\S_1$. Indeed, this property of $\S_1^m$ follows from a combination of results of~\cite{Pis75,BCL94,LNP06}. The conclusion of Theorem~\ref{thm:main intro} is therefore easier in the special case $X=\S_1^m$. As we explained above, since by~\cite{Sch87,BLM89,Tal90}  every finite-dimensional subspace $X$ of $\ell_1$ embeds with distortion $O(1)$ into $\ell_1^m$ for some $m=\dim(X)^{O(1)}$, for $\ell_1$ it suffices to prove the impossibility of metric dimension reduction when the target is the special subspace $\ell_1^m$ (as done in~\cite{BC03}) rather than a general subspace. However, it remains open whether or not every finite-dimensional subspace $X$ of $\S_1$ embeds with distortion $O(1)$ into $\S_1^m$ for some $m=\dim(X)^{O(1)}$. It isn't clear if it is reasonable to expect that such a phenomenon  holds in $\S_1$, because the proofs in~\cite{Sch87,BLM89,Tal90} rely on (substantial)   coordinate sampling arguments that seem to be inherently commutative and without a matricial interpretation.

We prove Theorem~\ref{thm:markov convexity} by showing directly that for every $q\in (1,2)$, any finite-dimensional subspace $X$ of $\S_1$ embeds into $\S_q$  with distortion $\dim(X)^{1-1/q}$; see Theorem~\ref{thm:embed into bigger q} below. This distortion is $O(1)$ when $q=1+1/\log\dim(X)$, so Theorem~\ref{thm:markov convexity} follows from the fact that $\Pi_2(\S_q)\lesssim 1/\sqrt{q-1}$ for every $q\in (1,2]$, which can be shown to hold true by combining results of~\cite{Pis75,BCL94,LNP06}.  The above estimate $\cc_X(\S_q)\le \dim(X)^{1-1/q}$ builds on a structural result of~\cite{Tom79} that is akin to an important lemma of Lewis~\cite{Lew78} in the commutative setting (namely for an $L_1(\mu)$ space instead of $\S_1$), in combination with matricial estimates that constitute the bulk of the technical part of our contribution. As an aside,  we provide a substantially simpler proof of a slight variant (that suffices for our purposes) of the aforementioned  noncommutative Lewis-like lemma of~\cite{Tom79} via a quick variational argument.

\begin{remark} The fact (Theorem~\ref{thm:embed into bigger q}) that any finite-dimensional linear subspace $X$ of $\S_1$ embeds with distortion $O(1)$ into $\S_{q}$ for $q=1+1/\log\dim(X)$ yields additional useful information beyond the estimate on $\Pi_2(X)$ of Theorem~\ref{thm:markov convexity}. Indeed, it directly implies that the {\em martingale cotype $2$ constant}  of $X$ is at most $\sqrt{\log \dim(X)}$; the definition of this invariant, which is due to~\cite{Pis86}, is recalled in Section~\ref{sec:main} below. By~\cite{Bal92,MN13} this implies that the {\em metric Markov cotype $2$ constant} (see~\cite{Bal92,MN13} for the relevant definition) of $X$ is $O(\sqrt{\log \dim(X)})$, which in turn  implies improved extension results for $X$-valued Lipschitz functions using~\cite{Bal92} as well as improved estimates for $X$-valued nonlinear spectral calculus using~\cite{MN10}. This also yields several improved Littlewood--Paley estimates~\cite{Xu98,MTX06,HN16} for $X$-valued functions and improved quantitative differentiation estimates for such functions~\cite{HN16}. Finally, using~\cite{LN14}, it yields improved vertical-versus-horizontal Poincar\'e inequalities for functions on the Heisenberg group that take values in low-dimensional subspaces of $\S_1$ (as an aside, it is natural to recall here the very interesting open question whether the Heisenberg group admits a bi-Lipschitz embedding into $\S_1$; see~\cite{NY17}). We shall not  include detailed statements of these  applications here because this would result in an exceedingly long digression, but one should note their availability.
\end{remark}

The impossibility result  for dimension reduction in $\ell_1$ was proved in~\cite{BC03} using linear programming duality. Different proofs were subsequently found in~\cite{LN04,ACNNH11,Reg13}. Specifically, the proof in~\cite{LN04} was a geometric argument (see also~\cite{LMN04} for a variant of the same idea for the Laakso graph), the proof in~\cite{ACNNH11} was a combinatorial argument (though inspired by the linear programming approach of~\cite{BC03}), and the proof in~\cite{Reg13} was an information-theoretical argument. Of these proofs, those of~\cite{BC03,ACNNH11,Reg13} rely on the coordinate structure of $\ell_1$ and do not seem to extend to the noncommutative setting of $\S_1$. The geometric approach of~\cite{LN04} (and its variants in~\cite{LMN04,JS09}) is more robust and could be used to deduce the impossibility of dimension reduction into $\S_1^m$ for small $m$, but not to obtain the full strength of Theorem~\ref{thm:main intro}. Also, we shall now explain why the method of~\cite{LN04} is inherently unsuited for obtaining the impossibility statement of Theorem~\ref{thm:qs intro} for quotients of subsets. To this end, we need to describe the bi-Lipschitz invariant that was used in~\cite{LN04} and is dubbed ``diamond convexity'' in~\cite{EMN17}. The iterative construction of the diamond graphs $\{\mathsf{D}_k\}_{k=1}^\infty$ (see Figure~\ref{fig:diamond-laakso}) replaces each edge $\{u,v\}$ in the $(k-1)$'th stage $\mathsf{D}_{k-1}$  by a quadrilateral $\{u,a,v,b\}$, i.e., $\{u,a\},\{a,v\}, \{v,b\}, \{b,u\}$ are the corresponding new edges in $\mathsf{D}_{k}$. The pair $\{a,b\}$ is  called a level-$k$ anti-edge and the set of all level-$k$ anti-edges  is denoted $\mathsf{A}_k$.

\begin{figure}[h]
\centering
\fbox{
\begin{minipage}{6.25in}
\centering
\smallskip
\includegraphics[scale=1.3]{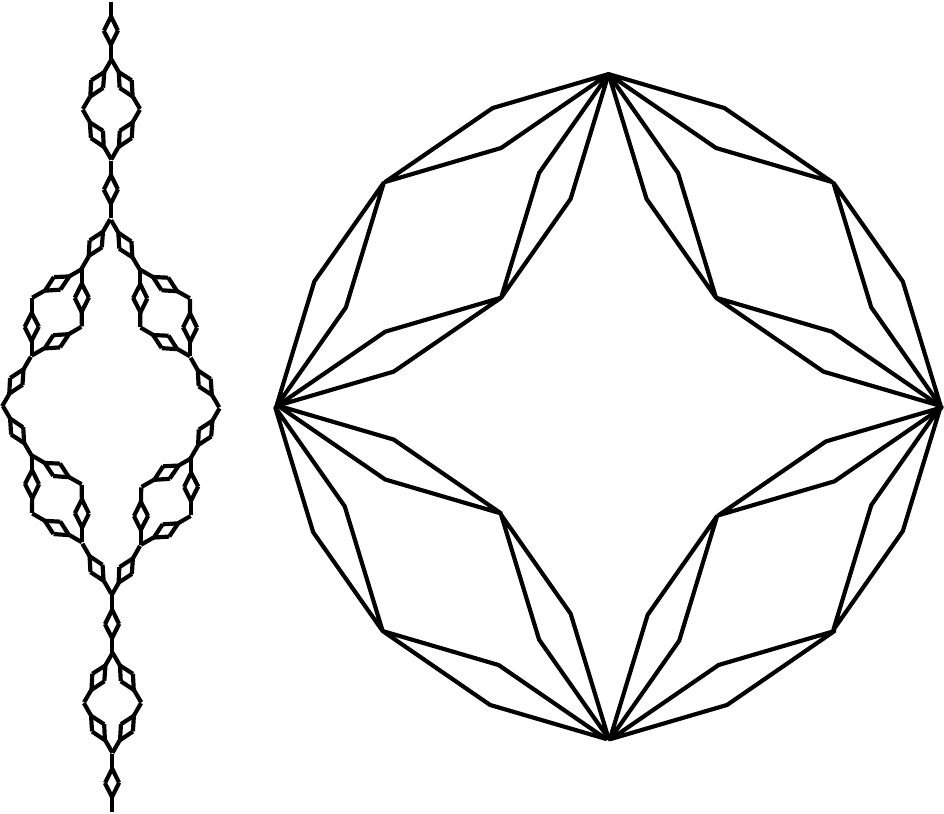}
\smallskip
\caption{\small \em The diamond graph $\mathsf{D}_3$ (on the right) and the Laakso graph $\mathsf{L}_3$ (on the left). Both the diamond graphs $\{\mathsf{D}_k\}_{k=1}^\infty$ and the Laakso graphs $\{\mathsf{L}_k\}_{k=1}^\infty$ are defined iteratively as follows, starting with $\mathsf{D}_1=\mathsf{L}_1$ being a single edge. To pass from $\mathsf{D}_k$ to $\mathsf{D}_{k+1}$, replace each edge of $\mathsf{D}_k$ by two parallel paths of length $2$. To pass from $\mathsf{L}_k$ to $\mathsf{L}_{k+1}$, subdivide each edge of $\mathsf{L}_k$ into a path of length $4$, remove the middle two edges in this path, and replace them by two parallel  paths of length $2$.
}
\label{fig:diamond-laakso}
\end{minipage}
}
\end{figure}

Following~\cite{EMN17}, the {\em diamond $2$-convexity constant} of a metric space $(\MM,d_\MM)$, denoted $\Delta_2(\MM)$, is the infimum over those $\Delta\in (0,\infty]$ such that for every $k\in \N$, every $f:V(\mathsf{D}_k)\to \MM$ satisfies
$$
\sum_{j=1}^k\sum_{\{a,b\}\in \mathsf{A}_j} d_\MM\big(f(a),f(b)\big)^2\le \Delta^2\sum_{\{u,v\}\in E(\mathsf{D}_k)}d_\MM\big(f(u),f(v)\big)^2.
$$
With this terminology, the proof of~\cite{LN04} derives an upper bound on $\Delta_2(\ell_1^m)$ and contrasts  it with $\Delta_2(\mathsf{D}_k)$, allowing one to deduce that $\cc_{\ell_1^m}(\mathsf{D}_k)$ must be large if $m$ is small, similarly to the way we used the Markov $2$-convexity constant in~\eqref{eq:qs lower ratio}. But, working with diamond convexity  cannot yield impossibility results for quotients of subsets as in Theorem~\ref{thm:qs intro}, because in~\cite{EMN17} it is shown that there exist metric spaces $(\MM,d_\MM)$ and $(\NN,d_\NN)$ such that $\MM$ is a Lipschitz quotient of $\NN$  yet $\Delta_2(\NN)<\infty$ and $\Delta_2(\MM)=\infty$. In other words, in contrast to Markov convexity, diamond $2$-convexity is not preserved under Lipschitz quotients. Thus, working with Markov $2$-convexity as we do here has an advantage over the approach of~\cite{LN04} by yielding Theorem~\ref{thm:qs intro} whose statement is new  even for $\ell_1$.

%\begin{remark} explain diamond convexity~\cite{LN04,EMN17,KN05,NS02,MN08,AN17}

%\end{remark}

\medskip

\noindent{\bf Acknowledgements.} We thank A. Andoni, R. Krauthgamer and M. Mendel for helpful input.

%\subsection*{Acknowledgements} We thank A. Andoni, R. Krauthgamer and M. Mendel for helpful input.

%\subsection{A noncommutative Lewis-type basis}

\section{Proof of Theorem~\ref{thm:markov convexity}}\label{sec:main}

We shall start by recording for ease of later reference some basic notation and well-known facts about Schatten--von Neumann trace classes that will be used repeatedly in what follows. The  standard material that appears below can be found in many texts, including e.g.~\cite{Dix96,Bha97,Sim05,Car10}.

Throughout, the Hilbert space $\ell_2$ will be over the real\footnote{For many purposes it is important to work with complex scalars, and correspondingly complex matrices. However, for the purpose of the ensuing metric results, statements over $\R$ are equivalent to their complex counterparts.} scalar field $\R$. The standard scalar product on $\ell_2$ is $\langle\cdot,\cdot\rangle :\ell_2\times\ell_2\to \R$. Given a closed linear subspace $V$ of $\ell_2$, the orthogonal projection onto $V$ will be denoted $\proj_V:\ell_2\to \ell_2$ and the orthogonal complement of $V$ will be denoted $V^\perp$. The group of orthogonal operators on $\ell_2$ is denoted $\mathsf{O}_{\ell_2}$ and the set of compact operators $T:\ell_2\to \ell_2$ is denoted $\mathsf{K}_{\ell_2}$. The elements of $\mathsf{K}_{\ell_2}$ are characterized as those operators $T:\ell_2\to \ell_2$ that admit a singular value decomposition, i.e., they can be written as $T=U\Sigma V$, where $U,V\in \mathsf{O}_{\ell_2}$ and $\Sigma:\ell_2\to \ell_2$ is a diagonal operator (say, relative to the standard coordinate basis $\{e_i\}_{i=1}^\infty$ of $\ell_2$) with nonnegative entries that tend to $0$. The diagonal entries of $\Sigma$ are called the singular values of $T$ and their decreasing rearrangement is denoted $\sigma_1(T)\ge \sigma_2(T)\ge \cdots$. Note that $\sqrt{T^*T}=V^{-1}\Sigma V$ and $\sqrt{TT^*}=U\Sigma U^{-1}$, and so $T=UV\sqrt{T^*T}=\sqrt{TT^*}UV$ (polar decompositions).

Given $\beta\in (0,\infty)$ and a symmetric positive semidefinite operator $T\in \mathsf{K}_{\ell_2}$, the power $T^\beta\in \mathsf{K}_{\ell_2}$ is defined via the usual functional calculus, i.e., if $T=U\Sigma U^{-1}$ is the singular value decomposition of $T$, then $T^\beta=U\Sigma^\beta U^{-1}$, where $\Sigma^\beta$ is obtained from the  operator $\Sigma$ (which we recall is diagonal with nonnegative entries) by raising each of its  entries to the power $\beta$. In what follows, it will also be very convenient to adhere to (and make frequent use of) the following convention for negative powers of symmetric positive semidefinite operators . If the diagonal of $\Sigma$ is $(\sigma_1,\sigma_2,\ldots)\in [0,\infty)^{\aleph_0}$, then let $\Sigma^{-\beta}$ be the diagonal operator  whose $i$'th diagonal entry equals $0$ if $\sigma_i=0$ and equals $1/\sigma_i^\beta$ if $\sigma_i>0$. Then, write $T^{-\beta}=U\Sigma^{-\beta} U^{-1}$. Observe that if $T$ is invertible in addition to being symmetric and positive semidefinite, then under this convention  $T^{-1}$ coincides with the usual inverse of $T$. But, in general we have $TT^{-1}=T^{-1}T=\proj_{\ker(T)^\perp}$, where $\ker(T)$ is the kernel of $T$.

An operator $T\in \mathsf{K}_{\ell_2}$ is said to be nuclear if $\sum_{j=1}^\infty\sigma_j(T)<\infty$. In this case, the trace of $T$ is well defined as $\trace(T)=\sum_{j=1}^\infty \langle Te_j,e_j\rangle$.  Given $p\in (0,\infty)$, the Schatten--von Neumann trace class $\S_p$ is the space of all $T\in  \mathsf{K}_{\ell_2}$ whose singular values are $p$-summable, in which case one defines $\|T\|_{\S_p}$  by
\begin{equation}\label{eq:def schatten p}
\|T\|_{\S_p}\eqdef \bigg(\sum_{j=1}^\infty \sigma_j(T)^p\bigg)^{\frac{1}{p}}=\left(\trace\left[\left(T^*T\right)^{\frac{p}{2}}\right]\right)^{\frac{1}{p}}=\left(\trace\left[\left(TT^*\right)^{\frac{p}{2}}\right]\right)^{\frac{1}{p}}.
\end{equation}
When $p=\infty$ the quantity $\|T\|_{\S_\infty}=\sup_{j\in \N} \sigma_j(T)$ is the operator norm of $T$. If $p\in [1,\infty]$, then $\|\cdot\|_{\S_p}$ is a norm; the (non-immediate) proof of this fact is a classical theorem of von Neumann~\cite{vN37}.

The Schatten--von Neumann norms are invariant under the group $\mathsf{O}_{\ell_2}$, i.e., for all $p\in (0,\infty)$,
\begin{equation}\label{eq:unitarily invariant}
\forall\, U,V\in\mathsf{O}_{\ell_2},\ \forall\, T\in \S_p,\qquad \|UTV\|_{\S_p}=\|T\|_{\S_p}.
\end{equation}
The von Neumann trace inequality~\cite{vN37} (see also~\cite{Mir75}) asserts that every $S,T\in \mathsf{K}_{\ell_2}$ satisfy
\begin{equation}\label{eq:von neumann}
\trace(ST)\le \sum_{j=1}^\infty \sigma_j(S)\sigma_j(T).
\end{equation}
This implies in particular that if $S\in \S_1$ is positive semidefinite and $T\in \mathsf{K}_{\ell_2}$, then
\begin{equation}\label{eq:ideal property}
\trace[ST]=\trace[TS]\le \|T\|_{\S_\infty}\trace[S].
\end{equation}
Also, by trace duality (see e.g.~\cite[Theorem~7.1]{Car10}), the von Neumann inequality~\eqref{eq:von neumann} implies the H\"older inequality for Schatten--von Neumann norms (see e.g.~\cite[Corollary~IV.2.6]{Bha97}), which asserts that if $a,b,c\in [1,\infty]$ satisfy $\frac{1}{c}=\frac{1}{a}+\frac{1}{b}$, then for every $A\in \S_a$ and $B\in \S_b$ we have
\begin{equation}\label{eq:holder schatten}
\|AB\|_{\S_c}\le \|A\|_{\S_a}\|B\|_{\S_b}.
\end{equation}

For  $m\in \N$, the above discussion can be repeated mutatis mutandis with the infinite dimensional Hilbert space $\ell_2$ replaced by the $m$-dimensional Euclidean space $\ell_2^m$. In this setting, we denote the corresponding Schatten--von Neumann matrix spaces by $\S_p^m$ for every $p\in (0,\infty)$. We shall also use the standard notations $\mathsf{K}_{\ell_2^m}=\mathsf{M}_m(\R)$ and $\mathsf{O}_{\ell_2^m}=\mathsf{O}_m$. Some of the ensuing arguments  are carried out for linear subspaces of $\S_p^m$ rather than for arbitrary finite-dimensional linear subspaces of $\S_p$.  We suspect that this restriction is only a matter of convenience and it could be removed (specifically, in Lemma~\ref{lem:lewis} below), but for our  purposes  it suffices to treat subspaces of $\S_p^m$ by the following simple and standard lemma (a truncation argument). Since we could not locate a clean reference for this statement, we include its straightforward proof in Remark~\ref{rem:schauder} below.

\begin{lemma}\label{lem:discretization}
Fix $p\in [1,\infty)$ and suppose that $X$ is a finite-dimensional linear subspace of $\S_p$. For every $\e\in (0,1)$ there exist an integer  $m=m(X,\e)\in \N$ and a linear operator $\mathsf{J}:X\to \S_p^m$ such that
\begin{equation}\label{eq:J isomorph}
\forall\, A\in X,\qquad (1-\e)\|A\|_{\S_p}\le\|\mathsf{J}A\|_{\S_p^m}\le \|A\|_{\S_p}.
\end{equation}
\end{lemma}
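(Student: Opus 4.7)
The plan is to take $\mathsf{J}$ to be the corner truncation $\mathsf{J}A\eqdef P_mAP_m$, where $P_m\eqdef\proj_{\spn\{e_1,\ldots,e_m\}}$ and the output is identified with an $m\times m$ matrix in $\S_p^m$. The upper bound in~\eqref{eq:J isomorph} is immediate from~\eqref{eq:holder schatten} applied twice, using $\|P_m\|_{\S_\infty}=1$, so what remains is to choose $m=m(X,\e)$ large enough that $\|P_mAP_m\|_{\S_p}\ge(1-\e)\|A\|_{\S_p}$ holds simultaneously for every $A$ in the unit sphere of $X$.

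The key ingredient is the pointwise convergence $\|T-P_mTP_m\|_{\S_p}\to 0$ as $m\to\infty$ for each individual $T\in\S_p$; here it is essential that $p<\infty$. I would first verify this for finite-rank $T$, where it reduces to $P_me_j\to e_j$ in $\ell_2$ together with the identity $\|\langle\cdot,v\rangle u\|_{\S_p}=\|u\|_2\|v\|_2$ for rank-one operators. For general $T\in\S_p$, the singular value decomposition produces finite-rank truncations $T_K$ with $\|T-T_K\|_{\S_p}^p=\sum_{j>K}\sigma_j(T)^p\to 0$ as $K\to\infty$, and a triangle inequality combined with the contractivity of the compression $B\mapsto P_mBP_m$ on $\S_p$ (again by~\eqref{eq:holder schatten}) finishes the argument.

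Finally I would pass from this pointwise statement to a uniform one on $X$ via a standard net argument. Since $X$ is finite-dimensional, its unit sphere $\{A\in X:\|A\|_{\S_p}=1\}$ is $\S_p$-compact, so it admits a finite $(\e/2)$-net $\{T_1,\ldots,T_N\}$. Applying the pointwise convergence to each $T_i$ and taking the maximum of the resulting truncation indices produces a single $m=m(X,\e)\in\N$ with $\|T_i-P_mT_iP_m\|_{\S_p}<\e/2$ for every $i\in\{1,\ldots,N\}$. For any unit-norm $A\in X$, picking $T_i$ with $\|A-T_i\|_{\S_p}<\e/2$ and chaining the triangle inequality with the contractivity of $B\mapsto P_mBP_m$ yields $\|\mathsf{J}A\|_{\S_p^m}\ge 1-\e$; rescaling then gives the general bound. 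The argument is entirely soft and presents no genuine obstacle: its only inputs are the density of finite-rank operators in $\S_p$ for $p<\infty$, the ideal property of Schatten norms under orthogonal projections, and the compactness of the unit ball in a finite-dimensional normed space.
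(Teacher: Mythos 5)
Your proof is correct and follows essentially the same route as the paper's (Remark~\ref{rem:schauder}): a finite net on the compact unit sphere of $X$, a compression to a finite-dimensional corner whose error on each net point is made small, and a triangle inequality plus the contractivity of the compression to pass to all of $X$. The only cosmetic difference is that you compress onto the span of the first $m$ standard coordinates and invoke the convergence $P_mTP_m\to T$ in $\S_p$ for $p<\infty$ (via density of finite-rank operators), whereas the paper builds the compression subspace $V$ directly from the leading singular vectors of the net elements, so that the error on each net point is read off immediately from the singular value tail.
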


For ease of later reference, we shall record the following general lemma. In it, as well as in the subsequent discussion, the usual PSD partial order is denoted by $\le$, i.e., given two symmetric bounded operators $S,T:\ell_2\to \ell_2$ the notation $S\le T$ means that $T-S$ is positive semidefinite.

\begin{lemma}\label{lem:1/2} Let $S,T\in \mathsf{K}_{\ell_2}$ be symmetric positive semidefinite operators such that $S\le T$. Then
$$
 0<\beta\le \frac12\implies \left\|S^\beta T^{-\beta}\right\|_{\S_\infty}\le 1.
$$
\end{lemma}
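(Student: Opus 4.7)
The plan is to reduce the operator-norm bound to a quadratic-form inequality, and then to obtain that inequality by conjugating the Löwner--Heinz inequality $S^{2\beta}\le T^{2\beta}$ by the self-adjoint operator $T^{-\beta}$.

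First I would unwind the definition of the operator norm. For any $x\in \ell_2$,
\[
\left\|S^{\beta}T^{-\beta}x\right\|_{\ell_2}^{2}=\bigl\langle S^{\beta}T^{-\beta}x,\,S^{\beta}T^{-\beta}x\bigr\rangle=\bigl\langle T^{-\beta}S^{2\beta}T^{-\beta}x,\,x\bigr\rangle,
\]
using that $S^{\beta}$ and $T^{-\beta}$ are symmetric (being defined via functional calculus applied to symmetric positive semidefinite operators). So $\|S^{\beta}T^{-\beta}\|_{\S_{\infty}}\le 1$ is equivalent to the operator inequality $T^{-\beta}S^{2\beta}T^{-\beta}\le \Id$ on $\ell_2$.

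Second, I would invoke the Löwner--Heinz inequality (operator monotonicity of $t\mapsto t^{r}$ for $r\in[0,1]$): since $0\le S\le T$ and $2\beta\in(0,1]$, one has $S^{2\beta}\le T^{2\beta}$. Conjugating this inequality by the self-adjoint operator $T^{-\beta}$ (which preserves the positive semidefinite order, since $A\le B$ implies $C^{\ast}AC\le C^{\ast}BC$ for any bounded $C$) yields
\[
T^{-\beta}S^{2\beta}T^{-\beta}\le T^{-\beta}T^{2\beta}T^{-\beta}.
\]
The point of choosing the exponent $2\beta$ in the middle is now apparent: in the singular value decomposition $T=U\Sigma U^{-1}$, the right-hand side equals $U\Sigma^{-\beta}\Sigma^{2\beta}\Sigma^{-\beta}U^{-1}$, and by the convention for negative powers each diagonal entry becomes $0$ on $\ker(\Sigma)$ and $1$ on $\ker(\Sigma)^{\perp}$. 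Therefore $T^{-\beta}T^{2\beta}T^{-\beta}=\proj_{\ker(T)^{\perp}}\le \Id$, which combined with the previous display gives $T^{-\beta}S^{2\beta}T^{-\beta}\le \Id$, as required.

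The main (and essentially only) non-routine ingredient is the Löwner--Heinz inequality; everything else is bookkeeping with the functional-calculus convention for $T^{-\beta}$ on $\ker(T)$. It is worth noting that the restriction $\beta\le 1/2$ enters exactly so that $2\beta\in[0,1]$, where operator monotonicity holds; the analogous statement fails for $\beta>1/2$ precisely because $t\mapsto t^{r}$ is not operator monotone for $r>1$. No subtle issues arise from the fact that $T$ need not be invertible: for $x\in \ker(T)$ one has $T^{-\beta}x=0$ by convention, so the quadratic form $\langle T^{-\beta}S^{2\beta}T^{-\beta}x,x\rangle$ simply vanishes there and the inequality is trivial.
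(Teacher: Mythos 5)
Your proposal is correct and follows the paper's proof essentially verbatim: both arguments apply the Löwner(--Heinz) operator monotonicity of $u\mapsto u^{2\beta}$ to get $S^{2\beta}\le T^{2\beta}$, use the identity $T^{-\beta}T^{2\beta}T^{-\beta}=\proj_{\ker(T)^\perp}\le \Id$, and convert the resulting inequality $T^{-\beta}S^{2\beta}T^{-\beta}\le\Id$ into the operator-norm bound via the quadratic form $\langle T^{-\beta}S^{2\beta}T^{-\beta}y,y\rangle=\|S^\beta T^{-\beta}y\|_{\ell_2}^2$. The only cosmetic difference is that you state the conjugation step as a general order-preservation fact while the paper writes out the chain of inner-product inequalities directly.
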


\begin{proof}
Fix  $\beta\in (0,1/2]$. Then $2\beta\in (0,1]$ and therefore the classical L\"owner theorem~\cite{Low34} (see e.g.~\cite[Theorem~V.1.9]{Bha97}) asserts that the function $u\mapsto u^{ 2\beta}$ is operator monotone on $[0,\infty)$. Hence the assumption $S\le T$ implies that $S^{2\beta}\le T^{2\beta}$, i.e.,
\begin{equation}\label{eq:2 beta}
\forall\, x\in \ell_2,\qquad \langle S^{2\beta}x,x\rangle\le \langle T^{2\beta}x,x\rangle.
\end{equation}
While $T$ need not be invertible, we have $T^{-\beta}T^{2\beta}T^{-\beta}=\proj_{\ker(T)^\perp}$. Hence, for every $y\in \ell_2$ we have
\begin{multline*}
\|y\|_{\ell_2^2}^2\ge \left\|\proj_{\ker(T)^\perp}y\right\|_{\ell_2}^2=\left\langle \proj_{\ker(T)^\perp}y,y\right\rangle =\left\langle T^{-\beta}T^{2\beta}T^{-\beta}y,y\right\rangle =
\left\langle T^{2\beta}T^{-\beta}y,T^{-\beta}y\right\rangle\\
\stackrel{\eqref{eq:2 beta} }{\ge} \left\langle S^{2\beta}T^{-\beta}y,T^{-\beta}y\right\rangle=\left\langle S^{\beta}T^{-\beta}y,S^\beta T^{-\beta}y\right\rangle=\left\|S^{\beta}T^{-\beta}y\right\|_{\ell_2}^2.
\end{multline*}
Thus $\left\|S^{\beta}T^{-\beta}y\right\|_{\ell_2}\le \|y\|_{\ell_2}$ for all $y\in \ell_2$, which is the desired conclusion.
\end{proof}

Our proof of Theorem~\ref{thm:markov convexity} relies on Lemma~\ref{lem:lewis} below, which is a useful structural result for subspaces of $\S_p^m$.  In fact, we will only need the case $p=1$ of  Lemma~\ref{lem:lewis}, but we include its proof for general $p\in (0,\infty)$ because this does not require additional effort beyond the special case $p=1$.

Lemma~\ref{lem:lewis} is a noncommutative analogue of an important classical lemma that was proved by Lewis in~\cite{Lew78} for (finite-dimensional linear subspaces of) $L_p(\mu)$ spaces. The Lewis lemma was extended by Tomczak-Jaegermann in~\cite{Tom79} in a different manner to both  Banach lattices and Schatten--von Neumann classes. In particular, Theorem~2.3 of~\cite{Tom79} states a slightly different noncommutative Lewis-type lemma for $\S_p$ when $1<p<\infty$ (note that this is proved in~\cite{Tom79} only when $2\le p<\infty$ since only that  range is needed in~\cite{Tom79}). The variant that is stated in~\cite{Tom79} would suffice for our purposes as well, but we include a different proof here because the argument of~\cite{Tom79} is significantly more sophisticated than the way we proceed below. As an aside, our proof applies also to the range $0<p<1$ while the proof in~\cite{Tom79} does not because it relies inherently on duality. The need to obtain such a result for $L_p(\mu)$ spaces when $0<p<1$ arose in~\cite{SZ01}, where a new  proof of the Lewis lemma was obtained so as to be applicable to these values of $p$ (Lewis' argument in~\cite{Lew78} also relied on duality, hence requiring $p\ge 1$). This generalization turned out to lead to a simpler approach, and our proof below consists of a noncommutative adaptation of the argument of~\cite{SZ01}.

\begin{lemma}[Lewis-type basis for subspaces of $\S_p^m$]\label{lem:lewis} Fix $p\in (0,\infty)$ and $k,m\in \N$. Let $X$ be a  linear subspace of $\S_p^m$ with $\dim(X)=k$. Then there exists a basis $\{T_1,\ldots,T_k\}$ of $X$ such that if we define
\begin{equation}\label{eq:def M}
M\eqdef \sum_{i=1}^k T_i^*T_i\in \S_p^m,
\end{equation}
then for all $i,j\in \{1,\ldots,k\}$, denoting by $\d_{ij}$ the Kronecker delta, we have
\begin{equation}\label{eq:delta ij}
\trace\left[\frac12\left(T_i^*T_j+T_j^*T_i\right)M^{\frac{p}{2}-1}\right]=\d_{ij}.
\end{equation}
\end{lemma}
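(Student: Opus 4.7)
The plan is to construct the desired basis variationally, following (and adapting to the noncommutative setting) the approach that \cite{SZ01} developed for $L_p$ spaces. Fix any auxiliary basis $\{S_1,\ldots,S_k\}$ of $X$ and parameterize the set of bases of $X$ by $A \in GL_k(\R)$ via $T_i(A) := \sum_{j=1}^k A_{ij} S_j$, writing $M(A) := \sum_{i=1}^k T_i(A)^* T_i(A)$. I would consider the optimization problem
\begin{equation*}
\mathrm{minimize}\quad \trace\bigl[M(A)^{p/2}\bigr] \qquad \mathrm{subject\ to}\qquad A \in GL_k(\R),\quad |\det A| = 1.
\end{equation*}

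First I would verify that the minimum is attained. By equivalence of norms on the finite-dimensional space $X$ one has $\|T_i(A)\|_{\S_p}\gtrsim_{S_1,\ldots,S_k}\|(A_{i1},\ldots,A_{ik})\|_{\ell_2^k}$; the comparison $\|\cdot\|_{\S_p}\le m^{1/p}\|\cdot\|_{\S_\infty}$ on $\S_p^m$ gives $\|T_i\|_{\S_\infty}\ge m^{-1/p}\|T_i\|_{\S_p}$; and the PSD inequality $M\succeq T_i^*T_i$ yields $\trace[M^{p/2}]\ge \|M\|_{\S_\infty}^{p/2}\ge \|T_i\|_{\S_\infty}^p$. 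Chaining these bounds shows $\trace[M(A)^{p/2}]\to\infty$ as $\|A\|_{\mathrm{op}}\to\infty$ along the constraint surface $\{|\det A|=1\}$, so a minimizer $A^\bullet$ exists by continuity.

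Next I would derive the first-order optimality condition. Using the identity $\tfrac{d}{dt}\trace[(M+tN)^{p/2}]\big|_{t=0}=\tfrac{p}{2}\trace[M^{p/2-1}N]$ for symmetric $N$, together with $\partial M/\partial A_{ij}=T_i^*S_j+S_j^*T_i$ and Jacobi's formula $\partial\log|\det A|/\partial A_{ij}=(A^{-1})_{ji}$, the Lagrange condition at $A^\bullet$ becomes
\begin{equation*}
p\,\trace\left[\tfrac{1}{2}(T_i^*S_j+S_j^*T_i)\,M^{p/2-1}\right]\;=\;\mu\,(A^{-1})_{ji}\qquad \forall\, i,j\in\{1,\ldots,k\},
\end{equation*}
for some multiplier $\mu\in\R$. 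The crucial observation is that the optimization problem is invariant under a change of the auxiliary basis $\{S_j\}$: the objective $\trace[M^{p/2}]$ depends only on $\{T_1,\ldots,T_k\}$, while $\log|\det A|$ merely shifts by an additive constant, so critical points are preserved. Taking the auxiliary basis to be the optimal basis itself forces $A^\bullet=I$ and $S_j=T_j$, collapsing the Lagrange condition to $p\,G_{ij}=\mu\,\delta_{ij}$, where $G_{ij}:=\trace\bigl[\tfrac{1}{2}(T_i^*T_j+T_j^*T_i)M^{p/2-1}\bigr]$. Setting $i=j$ and summing, and using $\sum_i G_{ii}=\trace[M^{p/2}]$, identifies $\mu = p\,\trace[M^{p/2}]/k$. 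Finally, rescaling the basis $T_i\leftarrow\lambda T_i$ (which multiplies $G$ by $\lambda^p$) for the unique $\lambda>0$ that makes $G=I_k$ produces \eqref{eq:delta ij}.

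The main obstacle is the coercivity step, which is where the finite dimensionality of $X$ and the specific structure of $\S_p^m$ enter and without which the Lagrange calculation would be vacuous. A minor subtlety is that if $M(A^\bullet)$ fails to be invertible the derivative identity requires some care; this can be bypassed by first replacing $\ell_2^m$ by the orthogonal complement of $\bigcap_{T\in X}\ker(T)$, which at worst decreases $m$ and does not affect the conclusion.
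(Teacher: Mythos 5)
Your proposal is correct and follows essentially the same route as the paper: both set up a variational problem over $\mathsf{GL}_k(\R)$ à la the commutative argument of~\cite{SZ01} and extract~\eqref{eq:delta ij} from the Lagrange condition, your ``minimize $\trace[M(A)^{p/2}]$ subject to $|\det A|=1$'' being the standard equivalent reformulation of the paper's ``maximize $\det(A)$ subject to $\trace[\Lambda(A)^p]=1$.'' The only cosmetic differences are that you justify differentiability by passing to the complement of the common kernel (where $M$ is invertible) rather than via the Cauchy-integral functional calculus, and you normalize the Lagrange identity by a basis-change argument rather than by contracting against $(B^*)_{tv}$; both are fine.
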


\begin{proof} Fix an arbitrary basis $\{W_1,\ldots,W_k\}$ of $X$. For every matrix $A=(a_{st})\in \mathsf{M}_k(\R)$ define
\begin{equation}\label{eq:def SiB}
\forall\, j\in \{1,\ldots,k\},\qquad S_j(A)\eqdef \sum_{u=1}^k a_{ju} W_u\in X,
\end{equation}
and
\begin{equation}\label{eq:def L}
\Lambda(A)\eqdef \bigg(\sum_{j=1}^k S_j^*(A)S_j(A)\bigg)^{\frac12}\in \S_p^m.
\end{equation}
Since $W_1,\ldots,W_k$ are linearly independent, $\Lambda(A)=0\iff A=0$. It follows that $A\mapsto \|\Lambda(A)\|_{\S_p^m}$ is equivalent to a  norm on $\mathsf{M}_k(\R)$. Indeed,  $\|\Lambda(A)\|_{\S_p^m}\asymp_{p,m} \|\Lambda(A)\|_{\S_2^m}$ and one computes directly that  $A\mapsto \|\Lambda(A)\|_{\S_2^m}$ is a Hilbertian semi-norm on $\mathsf{M}_k(\R)$. Hence, if we define
\begin{equation}\label{eq:def psi}
\psi(A)\eqdef \|\Lambda(A)\|_{\S_p^m}^p=\trace\left[\Lambda(A)^p\right],
\end{equation}
then the set $\{A\in \mathsf{M}_k(\R):\ \psi(A)=1\}$ is compact.  Therefore the continuous mapping $A\mapsto\det(A)$ attains its maximum on this set, so  fix from now on some $B=(b_{st})\in \mathsf{M}_k(\R)$ such that
$$
\det(B)=\max_{\stackrel{A\in\mathsf{M}_k(\R)}{ \psi(A)=1}}\det(A).
$$

Because $\det(B)>0$, we will soon explain that $\psi$ is continuously differentiable on a neighborhood of $B$. Therefore there exists $\lambda\in \R$ (a Lagrange multiplier) such that $(\nabla \det)(B)=\lambda(\nabla\psi)(B)$. A standard formula for the gradient of the determinant (which follows directly from the cofactor expansion) asserts that $(\nabla \det)(B)=\det(B) (B^*)^{-1}$. We will also soon compute that
\begin{equation}\label{eq:grad identity}
\forall\, u,t\in \{1,\ldots,k\},\qquad \big((\nabla \psi)(B)\big)_{ut}= \frac{p}{2}\trace\left[\big(S_u(B)^*W_t+W_t^*S_{u}(B)\big)\Lambda(B)^{p-2}\right].
\end{equation}
Therefore, the above Lagrange multiplier identity asserts that for all $u,t\in \{1,\ldots,k\}$,
\begin{equation}\label{eq:coordinate identity}
\big((B^*)^{-1}\big)_{ut}=\frac{\lambda p}{2\det(B)}\trace\left[\big(S_u(B)^*W_t+W_t^*S_{u}(B)\big)\Lambda(B)^{p-2}\right].
\end{equation}
Fix $v\in \{1,\ldots,k\}$, multiply~\eqref{eq:coordinate identity} by $(B^*)_{tv}=b_{vt}$ and sum over $t\in \{1,\ldots,k\}$, thus arriving at
\begin{equation}\label{eq:Si version}
\d_{uv}=\frac{\lambda p}{2\det(B)}\trace\left[\big(S_u(B)^*S_v(B)+S_v(B)^*S_{u}(B)\big)\Lambda(B)^{p-2}\right].
\end{equation}
By summing~\eqref{eq:Si version} over $u=v\in \{1,\ldots,k\}$ while recalling the definition of the matrix $\Lambda(B)$ in~\eqref{eq:def L}, we see that $0<2n\det(B)=p\lambda\trace[\Lambda(B)^{p/2}]$. Since $\Lambda(B)$ is positive semidefinite, it follows that $\lambda>0$.  Hence, the assertions of Lemma~\ref{lem:lewis} hold true for $\{T_j=(\lambda p/\det(B))^{1/p} S_j(B)\}_{j=1}^k$.

It remains to verify that $\psi$ is continuously differentiable and the identity~\eqref{eq:grad identity} holds true; this is a standard exercise in spectral calculus which we include for completeness.  Consider the subspace $V=\ker(W_1)\cap \ldots\cap \ker(W_1)\subset \ell_2^m$ and observe that for every invertible matrix $A\in \mathsf{GL}_k(\R)$  the definitions~\eqref{eq:def SiB} and~\eqref{eq:def L} of $S_1(A),\ldots,S_k(A)$ and $\Lambda(A)$, respectively, imply that we also have $V=\ker(S_1(A))\cap\ldots\cap \ker(S_k(A))=\ker(\Lambda(A))$. So, for all $A\in \mathsf{GL}_k(\R)$ the restrictions of $\Lambda(A)$ to $V$ and $V^\perp$ satisfy $\Lambda(A)|_V=0$ and $\Lambda(A)_{V^{\perp}}\in \mathsf{GL}(V^\perp)$, respectively (the latter assertion is that $\Lambda(A)_{V^{\perp}}:V^\perp\to V^{\perp}$ is invertible). Fix $\e\in (0,\infty)$ that is strictly smaller than the smallest nonzero eigenvalue of $\Lambda(B)^2$ and let $\mathcal{D}$ be a simply connected open domain that is contained in the complex half-plane $\{z\in\C:\ \Re(z)>\e\}$ and contains all of the nonzero eigenvalues of $\Lambda(B)^2$. By continuity of the mapping $\Lambda:\mathsf{M}_k(\R)\to \S_p^m$ and the fact that $B$ is invertible, the above reasoning implies that there exists an open neighborhood $\mathcal{O}\subset\mathsf{M}_k(\R)$ of $B$  such that every $A\in \mathcal{O}$ is invertible and all of the nonzero eigenvalues of $\Lambda(A)^2$ are contained in $\mathcal{D}$. Since the function $z\mapsto z^{p/2}$  is analytic on $\mathcal{D}$, by the Cauchy integral formula (for both this function and its derivative), every $A\in \mathcal{O}$ satisfies
\begin{equation}\label{eq:zp}
\Lambda(A)^p=\big(\Lambda(A)^2\big)^{\frac{p}{2}}=\frac{1}{2\pi \mathsf{i}}\oint_{\partial \mathcal{D}} z^{\frac{p}{2}}\big(z\mathsf{Id}_m-\Lambda(A)^2\big)^{-1}\ud z,
\end{equation}
and
\begin{equation}\label{eq z p-2}
\frac{p}{2}\Lambda(A)^{p-2}=\frac{p}{2}\big(\Lambda(A)^2\big)^{\frac{p}{2}-1}=\frac{1}{2\pi \mathsf{i}} \oint_{\partial \mathcal{D}} z^{\frac{p}{2}}\big(z\mathsf{Id}_m-\Lambda(A)^2\big)^{-2}\ud z,
\end{equation}
where $\mathsf{Id}_m\in \mathsf{M}_m(\R)$ is the identity matrix.  It is important to note that~\eqref{eq z p-2} respects our convention for negative powers of  symmetric positive semidefinite matrices that are not invertible, because the nonzero eigenvalues of any such $A$ are contained in $\mathcal{D}$, and also $0\in \C\setminus \overline{\mathcal{D}}$ so that the Cauchy integral vanishes on the kernel $V$. Recalling~\eqref{eq:def L}, the (quadratic) mapping $\Lambda^2:\mathsf{M}_k(\R)\to \S_p^m$ is continuously differentiable, and in fact for every  $u,t\in \{1,\ldots,k\}$ we can compute directly that
\begin{equation}\label{eq:differentiate L2}
\frac{\partial \Lambda^2}{\partial a_{ut}}(A)\stackrel{\eqref{eq:def SiB}\wedge\eqref{eq:def L}}{=}\frac{\partial }{\partial a_{ut}}\bigg(\sum_{j=1}^k \sum_{\alpha=1}^k\sum_{\beta=1}^k a_{j\alpha}a_{j\beta}W_\alpha^*W_\beta\bigg)(A)\stackrel{\eqref{eq:def SiB}}{=}S_u(A)^*W_t+W_t^*S_u(A).
\end{equation}
It therefore follows from the integral representation~\eqref{eq:zp}  that the function $\Lambda^p:\mathsf{M}_k(\R)\to \S_p^m$ is continuously differentiable on the neighborhood $\mathcal{O}$ of $B$, and moreover for every $u,t\in \{1,\ldots,k\}$,
\begin{multline}\label{eq:differentiate integral}
\frac{\partial \Lambda^p}{\partial a_{ut}}(B)\stackrel{\eqref{eq:zp} }{=}\frac{1}{2\pi \mathsf{i}}\oint_{\partial \mathcal{D}} z^{\frac{p}{2}}\big(z\mathsf{Id}_m-\Lambda(B)^2\big)^{-1} \frac{\partial\Lambda^2}{\partial a_{ut}}(B) \big(z\mathsf{Id}_m-\Lambda(B)^2\big)^{-1}\ud z\\\stackrel{\eqref{eq:differentiate L2}}{=}\frac{1}{2\pi \mathsf{i}}\oint_{\partial \mathcal{D}} z^{\frac{p}{2}}\big(z\mathsf{Id}_m-\Lambda(B)^2\big)^{-1} \big(S_u(B)^*W_t+W_t^*S_{u}(B)\big) \big(z\mathsf{Id}_m-\Lambda(B)^2\big)^{-1}\ud z,
\end{multline}
where we used the fact that for every invertible matrix function $(s\in \R)\mapsto  C(s)\in\mathsf{GL}_m(\R)$ we have $(C(s)^{-1})'=-C(s)^{-1}C'(s)C(s)^{-1}$ (as seen  by differentiating the identity $C(s)C(s)^{-1}=\Id_m$). Now,
\begin{eqnarray}
\nonumber (\nabla \psi)(B)_{ut}&\stackrel{\eqref{eq:def psi}}{=}&\trace\left[\frac{\partial \Lambda^p}{\partial a_{ut}}(B)\right]\\ \nonumber
 &\stackrel{\eqref{eq:differentiate integral}}{=}&\frac{1}{2\pi \mathsf{i}} \oint_{\partial \mathcal{D}} z^{\frac{p}{2}}\trace\left[\big(z\mathsf{Id}_m-\Lambda(B)^2\big)^{-1} \big(S_u(B)^*W_t+W_t^*S_{u}(B)\big) \big(z\mathsf{Id}_m-\Lambda(B)^2\big)^{-1}\right]\ud z\\ \label{eq:cyclicity}
 &=& \frac{1}{2\pi \mathsf{i}} \oint_{\partial \mathcal{D}} z^{\frac{p}{2}}\trace\left[\big(S_u(B)^*W_t+W_t^*S_{u}(B)\big) \big(z\mathsf{Id}_m-\Lambda(B)^2\big)^{-2} \right]\ud z\\
 &=& \nonumber \trace\left[\big(S_u(B)^*W_t+W_t^*S_{u}(B)\big)\bigg(\frac{1}{2\pi \mathsf{i}} \oint_{\partial \mathcal{D}} z^{\frac{p}{2}}\big(z\mathsf{Id}_m-\Lambda(B)^2\big)^{-2}\ud z\bigg)\right]\\
 &\stackrel{\eqref{eq z p-2}}{=}&  \frac{p}{2}\trace\left[\big(S_u(B)^*W_t+W_t^*S_{u}(B)\big)\Lambda(B)^{p-2}\right], \nonumber
\end{eqnarray}
where in~\eqref{eq:cyclicity} we used the cyclicity of the trace. This concludes the verification of~\eqref{eq:grad identity}.
\end{proof}

We shall now proceed to derive several additional lemmas as consequences of Lemma~\ref{lem:lewis}. These lemmas are steps towards the proof of Theorem~\ref{thm:embed into bigger q} below, which is the  main result of this section.

\begin{lemma}\label{eq:upper p} Fix $p,q\in (0,\infty)$ with $p<q$. Continuing with the notation of Lemma~\ref{lem:lewis}, we have
\begin{equation}
\label{eq:A upper lewis}
\forall\, A\in  X,\qquad \|A\|_{\S_p^m}\le k^{\frac{1}{p}-\frac{1}{q}}\left\|AM^{\frac{p-q}{2q}}\right\|_{\S_q^m}.
\end{equation}
\end{lemma}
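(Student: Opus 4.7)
The proof should be a direct application of the H\"older inequality for Schatten norms combined with the normalization built into the Lewis-type basis. The plan is to factor an arbitrary $A\in X$ as $A = (AM^{(p-q)/(2q)})\cdot M^{(q-p)/(2q)}$, estimate the first factor in $\S_q^m$ (that's what appears on the right side of~\eqref{eq:A upper lewis}), and use the Lewis identity~\eqref{eq:delta ij} to control $\|M^{(q-p)/(2q)}\|$ in an appropriate Schatten norm.

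More concretely, set $\alpha \eqdef (q-p)/(2q)>0$, so that $M^{(p-q)/(2q)}=M^{-\alpha}$. The first step is to verify the factorization $A = A M^{-\alpha}M^{\alpha}$ despite the fact that $M$ need not be invertible. Because $A\in X=\spn\{T_1,\ldots,T_k\}$ and $M=\sum_{i=1}^k T_i^*T_i$, any $v\in \ker(M)$ satisfies $0=\langle Mv,v\rangle = \sum_{i=1}^k \|T_iv\|_{\ell_2^m}^2$, whence $T_iv=0$ for every $i$ and thus $Av=0$. Consequently $\ker(M)\subset \ker(A)$, i.e., $A\proj_{\ker(M)^\perp}=A$. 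Under the paper's convention for negative powers, $M^{-\alpha}M^\alpha=\proj_{\ker(M)^\perp}$, so indeed $A=AM^{-\alpha}M^\alpha$.

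The second step is the H\"older application. Choose $r\in (0,\infty)$ by $1/r = 1/p-1/q$, so that $r=pq/(q-p)$ and $r\alpha=p/2$. By the H\"older inequality for Schatten norms~\eqref{eq:holder schatten} applied to the factorization from the previous paragraph,
\begin{equation*}
\|A\|_{\S_p^m}=\left\|AM^{-\alpha}\cdot M^{\alpha}\right\|_{\S_p^m}\le \left\|AM^{-\alpha}\right\|_{\S_q^m}\cdot \left\|M^{\alpha}\right\|_{\S_r^m}.
\end{equation*}
The third step is to evaluate $\|M^\alpha\|_{\S_r^m}$. Since $M$ is positive semidefinite, $\|M^\alpha\|_{\S_r^m}^r=\trace[M^{r\alpha}]=\trace[M^{p/2}]$. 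Setting $i=j$ in the Lewis identity~\eqref{eq:delta ij} gives $\trace[T_i^*T_i M^{p/2-1}]=1$ for every $i\in \{1,\ldots,k\}$; summing over $i$ and invoking~\eqref{eq:def M} yields $\trace[M^{p/2}]=\trace[MM^{p/2-1}]=k$. Hence $\|M^\alpha\|_{\S_r^m}=k^{1/r}=k^{1/p-1/q}$, and combining with the H\"older bound gives~\eqref{eq:A upper lewis}.

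The only subtlety worth flagging is the kernel handling in step one, which is precisely why the convention for negative powers of singular operators is introduced earlier; apart from that, each step is essentially formal. (When $p<1$ one uses the standard extension of Schatten H\"older to quasi-norms, which follows from the singular-value inequalities for products; this costs nothing in the estimate.)
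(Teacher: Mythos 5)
Your proof is correct and follows essentially the same route as the paper's: the same factorization $A=AM^{(p-q)/(2q)}M^{(q-p)/(2q)}$ justified via $\ker(M)\subset\ker(A)$, the same H\"older split with exponent $pq/(q-p)$, and the same use of the trace normalization $\trace[M^{p/2}]=k$ from~\eqref{eq:delta ij}. Your remarks on the kernel convention and on the quasi-norm extension of H\"older for $p<1$ are accurate and, if anything, slightly more explicit than the paper's treatment.
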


\begin{proof} Observe that
\begin{equation}\label{eq:trace M identity}
\left\|M^{\frac{q-p}{2q}}\right\|_{\S_{\frac{pq}{q-p}}^m}^{\frac{pq}{q-p}}\stackrel{\eqref{eq:def schatten p}}{=}\trace \left[M^{\frac{p}{2}}\right]\stackrel{\eqref{eq:def M}}{=}\sum_{i=1}^k \trace \left[T_i^*T_i M^{\frac{p}{2}-1}\right]\stackrel{\eqref{eq:delta ij}}{=}k.
\end{equation}
The definition~\eqref{eq:def M} of $M$ implies that $\ker(M)=\cap_{i=1}^k\ker(T_i)$. Since $A\in X=\spn(\{T_1,\ldots,T_k\})$, it follows that $\ker(A)\supset \ker(M)$. Recalling our convention for negative powers of symmetric positive semidefinite matrices that need not be invertible, this implies that $A=AM^{(p-q)/(2q)}M^{(q-p)/(2q)}$.   Hence, since $1/p=1/q+(q-p)/(pq)$, we conclude the proof of Lemma~\ref{eq:upper p} as follows.
\begin{equation*}
\|A\|_{\S_p^m}=\left\|AM^{\frac{p-q}{2q}}M^{\frac{q-p}{2q}}\right\|_{\S_p^m}\stackrel{\eqref{eq:holder schatten}}{\le} \left\|AM^{\frac{p-q}{2q}}\right\|_{\S_q^m}\left\|M^{\frac{q-p}{2q}}\right\|_{\S_{\frac{pq}{q-p}}^m}\stackrel{\eqref{eq:trace M identity}}{=} k^{\frac{1}{p}-\frac{1}{q}}\left\|AM^{\frac{p-q}{2q}}\right\|_{\S_q^m}.\qedhere
\end{equation*}
\end{proof}

\begin{lemma}\label{lem:beta} Fix $p,\beta\in (0,\infty)$ with $\beta\le \frac12$.  Continuing with the notation of Lemma~\ref{lem:lewis}, we have
\begin{equation}\label{eq:desired beta in lemma}
\forall\, A\in X\subset \S_p^m,\qquad \left\|\left(A^*A\right)^{\beta}M^{-\beta}\right\|_{\S_\infty^m}\le  \left(\trace\left[A^*A M^{\frac{p}{2}-1}\right]\right)^{\beta}.
\end{equation}
\end{lemma}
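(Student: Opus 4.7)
The natural route is to reduce this to Lemma~\ref{lem:1/2}, the operator-monotonicity consequence of L\"owner's theorem, by exhibiting the PSD domination
\begin{equation*}
A^*A \le \lambda M, \qquad \text{where } \lambda = \trace\!\left[A^*A\, M^{\frac{p}{2}-1}\right].
\end{equation*}
Once this is established, Lemma~\ref{lem:1/2} applied to $S=A^*A$ and $T=\lambda M$ yields $\|(A^*A)^\beta (\lambda M)^{-\beta}\|_{\S_\infty^m}\le 1$ for $\beta\in(0,1/2]$; since $(\lambda M)^{-\beta}=\lambda^{-\beta} M^{-\beta}$ under the convention fixed for negative powers, rearranging gives exactly \eqref{eq:desired beta in lemma}. (If $\lambda=0$ then $A^*A\le 0$ forces $A=0$ and the inequality is trivial, so we may assume $\lambda>0$.)

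To establish the domination, expand $A=\sum_{i=1}^k c_i T_i$ in the Lewis-type basis of Lemma~\ref{lem:lewis}. For any $x\in\ell_2^m$, Cauchy--Schwarz gives
\begin{equation*}
\langle A^*A x,x\rangle = \bigg\|\sum_{i=1}^k c_i T_i x\bigg\|_{\ell_2^m}^{2} \le \bigg(\sum_{i=1}^k c_i^2\bigg)\sum_{i=1}^k \|T_i x\|_{\ell_2^m}^{2} = \bigg(\sum_{i=1}^k c_i^2\bigg)\langle M x,x\rangle,
\end{equation*}
so $A^*A\le (\sum_i c_i^2) M$. It remains to identify $\sum_i c_i^2$ with $\trace[A^*A M^{p/2-1}]$, and this is exactly what the Lewis-type normalization \eqref{eq:delta ij} is designed to furnish. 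Indeed, since $M^{p/2-1}$ is symmetric and we are working over $\R$, cyclicity of the trace together with transposition yield $\trace[T_i^*T_j M^{p/2-1}] = \trace[T_j^*T_i M^{p/2-1}]$ for all $i,j$, so \eqref{eq:delta ij} becomes $\trace[T_i^*T_j M^{p/2-1}] = \delta_{ij}$. Consequently,
\begin{equation*}
\trace\!\left[A^*A\, M^{\frac{p}{2}-1}\right] = \sum_{i,j=1}^k c_i c_j \trace\!\left[T_i^*T_j\, M^{\frac{p}{2}-1}\right] = \sum_{i=1}^k c_i^2,
\end{equation*}
which identifies $\lambda$ as claimed and closes the argument.

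There is no serious obstacle here; the only subtlety is keeping track of the convention for $M^{-\beta}$ when $M$ is not invertible. This is harmless because $\ker(M)=\bigcap_i\ker(T_i)\subset\ker(A)$ (as in the proof of Lemma~\ref{eq:upper p}), so on $\ker(M)$ both $A^*A$ and $(A^*A)^\beta M^{-\beta}$ vanish, and on $\ker(M)^\perp$ the operator $M$ is genuinely invertible and Lemma~\ref{lem:1/2} applies verbatim.
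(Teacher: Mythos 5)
Your proof is correct and follows essentially the same route as the paper's: expand $A$ in the Lewis-type basis, use Cauchy--Schwarz to get $A^*A\le\big(\sum_i c_i^2\big)M$, apply Lemma~\ref{lem:1/2}, and identify $\sum_i c_i^2$ with $\trace[A^*AM^{\frac{p}{2}-1}]$ via the normalization~\eqref{eq:delta ij}. The only cosmetic difference is that the paper symmetrizes the trace identity using $c_ic_j=c_jc_i$ rather than invoking cyclicity and transposition, but this is immaterial.
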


\begin{proof} Fix $A\in X$. Since $\{T_1,\ldots,T_k\}$ is a basis of $X$, we can write $A=\sum_{j=1}^ka_j T_j$ for some scalars $a_1,\ldots,a_k\in \R$. Observe that for every $x\in \R^m$ we have
\begin{multline*}
\langle A^*A x,x\rangle =\bigg\|\sum_{i=1}^k a_i T_ix\bigg\|_{\ell_2^m}^2 \le \bigg(\sum_{i=1}^k |a_i|\cdot\|T_ix\|_{\ell_2^m}\bigg)^2\le \bigg(\sum_{i=1}^k a_i^2\bigg)\sum_{i=1}^k \|T_ix\|_{\ell_2^m}^2\stackrel{\eqref{eq:def M}}{=} \bigg(\sum_{i=1}^k a_i^2\bigg)\langle Mx,x\rangle.
\end{multline*}
Hence the following matrix inequality holds true in the PSD order.
\begin{equation}\label{eq:|A|2}
A^*A\le \bigg(\sum_{i=1}^k a_i^2\bigg)M.
\end{equation}
By Lemma~\ref{lem:1/2}, which is where we are using the assumption $0<\beta\le \frac12$, it follows from~\eqref{eq:|A|2} that
\begin{equation}\label{eq:use beta lemma}
\left\|\left(A^*A\right)^\beta M^{-\beta}\right\|_{\S_\infty^m}\le \bigg(\sum_{i=1}^k a_i^2\bigg)^\beta.
\end{equation}
The desired estimate~\eqref{eq:desired beta in lemma} is equivalent to~\eqref{eq:use beta lemma} due to the following identity.
\begin{multline*}
\trace\left[A^*AM^{\frac{p}{2}-1}\right]=\trace\left[\bigg(\sum_{i=1}^k a_iT_i^*\bigg)\bigg(\sum_{j=1}^k a_jT_j\bigg)M^{\frac{p}{2}-1}\right]\\=\sum_{i=1}^n\sum_{j=1}^n a_ia_j\trace\left[\frac12\left(T_i^*T_j+T_j^*T_i\right)M^{\frac{p}{2}-1}\right]\stackrel{\eqref{eq:delta ij}}{=}\sum_{i=1}^k a_i^2.\tag*{\qedhere}
\end{multline*}
\end{proof}

Thus far our reasoning worked for all $p\in (0,\infty)$, but the following lemma requires that $p\ge 1$.

\begin{lemma}\label{lem:upper lewis} Fix $p,q\in [1,\infty)$ with $p<q$. Continuing with the notation of Lemma~\ref{lem:lewis}, we have
\begin{equation}\label{eq:span upper AM}
\forall\, A\in X,\qquad  \left\|AM^{\frac{p-q}{2q}}\right\|_{\S_q^m}\le \max\left\{k^{\frac{p-2}{2}\left(\frac{1}{p}-\frac{1}{q}\right)},1\right\} \|A\|_{\S_p^m}.
\end{equation}
\end{lemma}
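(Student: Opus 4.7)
The strategy is to combine polar decomposition with H\"older's inequality for Schatten norms and Lemma~\ref{lem:beta}, exploiting the fact that $\alpha\eqdef (q-p)/(2q)$ satisfies both $\alpha<\frac12$ and $1-2\alpha=p/q$. Writing $A=U|A|$ via polar decomposition and factoring $AM^{-\alpha}=U|A|^{1-2\alpha}\cdot |A|^{2\alpha}M^{-\alpha}$, H\"older's inequality~\eqref{eq:holder schatten} with conjugate exponents $q$ and $\infty$, together with $\|U\|_{\S_\infty^m}\le 1$, yields
\begin{equation*}
\|AM^{-\alpha}\|_{\S_q^m}\le \||A|^{p/q}\|_{\S_q^m}\cdot \|(A^*A)^{\alpha}M^{-\alpha}\|_{\S_\infty^m}=\|A\|_{\S_p^m}^{p/q}\cdot \|(A^*A)^{\alpha}M^{-\alpha}\|_{\S_\infty^m}.
\end{equation*}
Lemma~\ref{lem:beta} then bounds the second factor by $(\trace[A^*AM^{p/2-1}])^{\alpha}=(\sum_{i=1}^k a_i^2)^{\alpha}$, where $A=\sum_{i=1}^k a_iT_i$ in the basis of Lemma~\ref{lem:lewis} and the last identity is the one already derived at the end of the proof of Lemma~\ref{lem:beta}. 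The problem therefore reduces to bounding $\sum_i a_i^2=\trace[A^*AM^{(p-2)/2}]$ by a suitable multiple of $\|A\|_{\S_p^m}^2$, and the two branches of the $\max$ in the statement correspond precisely to the regimes $p\ge 2$ and $p\in[1,2]$.

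When $p\ge 2$, my plan is to apply H\"older's inequality for Schatten norms with conjugate exponents $p/2$ and $p/(p-2)$ to the trace, combined with the identity $\trace[M^{p/2}]=k$ already recorded in~\eqref{eq:trace M identity}; this gives $\sum_i a_i^2\le k^{(p-2)/p}\|A\|_{\S_p^m}^2$. Substituting into the display above and rearranging the exponents of $k$ and $\|A\|_{\S_p^m}$ produces the bound with factor $k^{(p-2)(1/p-1/q)/2}$ after routine arithmetic (the exponents indeed satisfy $(p-2)(q-p)/(2pq)=\frac{p-2}{2}(\frac1p-\frac1q)$).

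When $p\in[1,2]$ the previous H\"older splitting is illegal because $p/2<1$, so I would instead use the decomposition $|A|^2=|A|^p\cdot |A|^{2-p}$ inside the trace and apply H\"older for the conjugate pair $(1,\infty)$:
\begin{equation*}
\sum_i a_i^2\le \||A|^p\|_{\S_1^m}\cdot \|(A^*A)^{(2-p)/2}M^{-(2-p)/2}\|_{\S_\infty^m}=\|A\|_{\S_p^m}^p\cdot \|(A^*A)^{(2-p)/2}M^{-(2-p)/2}\|_{\S_\infty^m}.
\end{equation*}
Invoking Lemma~\ref{lem:beta} a \emph{second} time with $\beta=(2-p)/2\in[0,\frac12]$ (the upper bound here is exactly the condition $p\ge 1$ of the present lemma), the second factor is controlled by $(\sum_i a_i^2)^{(2-p)/2}$. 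This produces the self-referential inequality $\sum_i a_i^2\le \|A\|_{\S_p^m}^p(\sum_i a_i^2)^{(2-p)/2}$ which, since $(2-p)/2<1$, unwinds to $\sum_i a_i^2\le \|A\|_{\S_p^m}^2$; substituting into the main display then yields $\|AM^{-\alpha}\|_{\S_q^m}\le \|A\|_{\S_p^m}$, matching the branch $1$ of the $\max$. The main conceptual point, and the place I expect to require the most care, is this implicit-inequality device; once it is in place, the $p\ge 2$ regime is essentially a one-line H\"older computation.
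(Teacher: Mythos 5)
Your proposal is correct and follows essentially the same route as the paper: the identical polar-decomposition/H\"older factorization, the same application of Lemma~\ref{lem:beta} with $\beta=\frac{q-p}{2q}$, the same H\"older step against $\trace[M^{p/2}]=k$ for $p\ge 2$, and the same double use of Lemma~\ref{lem:beta} with $\beta=1-\frac{p}{2}$ for $p\in[1,2]$. The only (cosmetic) difference is that you run the self-improving inequality on the scalar $\trace[A^*AM^{\frac{p}{2}-1}]=\sum_i a_i^2$ rather than on $\|(A^*A)^{1-\frac{p}{2}}M^{\frac{p}{2}-1}\|_{\S_\infty^m}$ as the paper does; the two bootstraps combine the same pair of inequalities and are equivalent.
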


\begin{proof} Fix any $A\in \S_p^m$. Writing $A=U\sqrt{A^*A}$ for some orthogonal matrix $U\in \mathsf{O}_m$, we have
\begin{multline}\label{eq:unitary}
\left\|AM^{\frac{p-q}{2q}}\right\|_{\S_q^m}=\left\|U\left(A^*A\right)^{\frac12}M^{\frac{p-q}{2q}}\right\|_{\S_q^m}
\stackrel{\eqref{eq:unitarily invariant}}{=}\left\|\left(A^*A\right)^{\frac{p}{2q}}\left(A^*A\right)^{\frac{q-p}{2q}}M^{\frac{p-q}{2q}}\right\|_{\S_q^m}\\ \stackrel{\eqref{eq:ideal property}}{\le}
\left\|\left(A^*A\right)^{\frac{p}{2q}}\right\|_{\S_q^m}\left\|\left(A^*A\right)^{\frac{q-p}{2q}}M^{\frac{p-q}{2q}}\right\|_{\S_\infty^m}
\stackrel{\eqref{eq:def schatten p}}{=}\|A\|_{\S_p^m}^{\frac{p}{q}}\left\|\left(A^*A\right)^{\frac{q-p}{2q}}M^{\frac{p-q}{2q}}\right\|_{\S_\infty^m},
\end{multline}

It follows from~\eqref{eq:unitary} that in order to prove the desired estimate~\eqref{eq:span upper AM} it suffice to establish that
\begin{equation}\label{eq:new goal}
\forall\, A\in X,\qquad \left\|\left(A^*A\right)^{\frac{q-p}{2q}}M^{\frac{p-q}{2q}}\right\|_{\S_\infty^m}\le \max\left\{k^{\frac{p-2}{2}\left(\frac{1}{p}-\frac{1}{q}\right)},1\right\} \|A\|_{\S_p^m}^{1-\frac{p}{q}}.
\end{equation}
To this end, suppose that $A\in X$ and apply Lemma~\ref{lem:beta}  with $\beta= \frac{q-p}{2q}\in (0,\frac12)$. It follows that
\begin{equation}\label{eq:set up for all p}
\left\|\left(A^*A\right)^{\frac{q-p}{2q}}M^{\frac{p-q}{2q}}\right\|_{\S_\infty^m}\le  \left(\trace\left[A^*A M^{\frac{p}{2}-1}\right]\right)^{\frac{q-p}{2q}}.
\end{equation}

Suppose first that $p\ge 2$. Then by~\eqref{eq:von neumann} with $S=A^*A$ and $T=M^{(p-2)/2}$, combined with  H\"older's inequality with exponents $p/2$ and $p/(p-2)$, the right hand side of~\eqref{eq:set up for all p} can be bounded as follows.
\begin{equation}\label{eq:for p>2}
\trace\left[A^*A M^{\frac{p}{2}-1}\right]\le \left\|A^*A\right\|_{\S_{\frac{p}{2}}}\left\|M^{\frac{p}{2}-1} \right\|_{\S_{\frac{p}{p-2}}}\stackrel{\eqref{eq:def schatten p}}{=}\|A\|_{\S_p}^2\left(\trace\left[M^{\frac{p}{2}}\right]\right)^{\frac{p-2}{p}}\stackrel{\eqref{eq:trace M identity}}{=} k^{1-\frac{2}{p}}\|A\|_{\S_p}^2.
\end{equation}
The case $p>2$ of the desired estimate~\eqref{eq:new goal} follows by substituting~\eqref{eq:for p>2} into~\eqref{eq:set up for all p}.

It remains to treat the range $1\le p< 2$ (which is the more substantial case; recall that for the present purposes, namely for proving Theorem~\ref{thm:markov convexity}, we need $p=1$).  Observe first that
\begin{eqnarray*}\label{eq:get to 2-p}
\nonumber \left\|\left(A^*A\right)^{\frac{q-p}{2q}}M^{\frac{p-q}{2q}}\right\|_{\S_\infty^m}&\stackrel{\eqref{eq:set up for all p}}{\le}&  \left(\trace\left[\left(A^*A\right)^{\frac{p}{2}} \left(A^*A\right)^{1-\frac{p}{2}}M^{\frac{p}{2}-1}\right]\right)^{\frac{q-p}{2q}}\\ \nonumber &\stackrel{\eqref{eq:ideal property}}{\le}& \left(\trace\left[\left(A^*A\right)^{\frac{p}{2}}\right]\right)^{\frac{q-p}{2q}}\left\|\left(A^*A\right)^{1-\frac{p}{2}}M^{\frac{p}{2}-1}\right\|_{\S_\infty^m}^{\frac{q-p}{2q}}
\\&\stackrel{\eqref{eq:def schatten p}}{=}&
\|A\|_{\S_p^m}^{\frac{p(q-p)}{2q}}\left\|\left(A^*A\right)^{1-\frac{p}{2}}M^{\frac{p}{2}-1}\right\|_{\S_\infty^m}^{\frac{q-p}{2q}}\nonumber \\&=&
\|A\|_{\S_p^m}^{1-\frac{p}{q}-\frac{(2-p)(q-p)}{2q}}\left\|\left(A^*A\right)^{1-\frac{p}{2}}M^{\frac{p}{2}-1}\right\|_{\S_\infty^m}^{\frac{q-p}{2q}}.
\end{eqnarray*}
Therefore, in order to establish the desired bound~\eqref{eq:new goal} when $1\le p< 2$, it suffices to prove that
\begin{equation}\label{eq:new goal 2-p}
\left\|\left(A^*A\right)^{1-\frac{p}{2}}M^{\frac{p}{2}-1}\right\|_{\S_\infty^m}\le \|A\|_{\S_p^m}^{2-p}.
\end{equation}
To this end, apply Lemma~\ref{lem:beta}  with $\beta= 1-\frac{p}{2}\in (0,\frac12]$; noting that this is only place in the proof of Lemma~\ref{lem:upper lewis} where the assumption $p\ge 1$ is used. It follows that
\begin{multline}\label{eq:bootstrap}
\left\|\left(A^*A\right)^{1-\frac{p}{2}}M^{\frac{p}{2}-1}\right\|_{\S_\infty^m}\stackrel{\eqref{eq:desired beta in lemma}}{\le} \left(\trace\left[A^*A M^{\frac{p}{2}-1}\right]\right)^{1-\frac{p}{2}}=\left(\trace\left[\left(A^*A\right)^{\frac{p}{2}} \left(A^*A\right)^{1-\frac{p}{2}}M^{\frac{p}{2}-1}\right]\right)^{1-\frac{p}{2}}\\\stackrel{\eqref{eq:ideal property}}{\le} \left(\trace\left[\left(A^*A\right)^{\frac{p}{2}}\right]\right)^{1-\frac{p}{2}}\left\|\left(A^*A\right)^{1-\frac{p}{2}}M^{\frac{p}{2}-1}\right\|_{\S_\infty^m}^{1-\frac{p}{2}}\stackrel{\eqref{eq:def schatten p}}{=}
\|A\|_{\S_p^m}^{p\left(1-\frac{p}{2}\right)}\left\|\left(A^*A\right)^{1-\frac{p}{2}}M^{\frac{p}{2}-1}\right\|_{\S_\infty^m}^{1-\frac{p}{2}}.
\end{multline}
By cancelling out the common terms in~\eqref{eq:bootstrap} and simplifying the resulting expression,  we arrive at~\eqref{eq:new goal 2-p}, thus completing the proof of Lemma~\ref{lem:upper lewis}.\end{proof}

The following theorem obtains the best-known upper bound on the distortion of an arbitrary finite-dimensional subspace of $\S_p$ in $\S_q$ for $q>p$. As we explain in Remark~\ref{rem:sharp} below, this bound is sharp when $1\le p<q\le 2$, and when $p>2$ obtaining the best possible bound in this context is an interesting (and challenging) open problem  (the corresponding question for $\ell_p$ is open as well).

\begin{theorem}\label{thm:embed into bigger q} Fix $p,q\in [1,\infty)$ with $p<q$. For every finite-dimensional linear subspace $X$ of $\S_p$,
\begin{equation}\label{cases distortion}
\cc_{\S_q}(X)\le \left\{\begin{array}{ll} \dim(X)^{\frac{1}{p}-\frac{1}{q}} &\mathrm{if}\ p\in [1,2],\\ \dim(X)^{\frac{p}{2}\left(\frac{1}{p}-\frac{1}{q}\right)}& \mathrm{if}\ p\in (2,\infty).\end{array}\right.
\end{equation}
\end{theorem}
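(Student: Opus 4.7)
The plan is to realize $X$ as a linear image inside $\S_q$ via an explicit \emph{linear} mapping built from the ``Lewis matrix'' $M$ provided by Lemma~\ref{lem:lewis}, and then read off the distortion by combining the two-sided matricial inequalities already proved in Lemma~\ref{eq:upper p} and Lemma~\ref{lem:upper lewis}.

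First I would use Lemma~\ref{lem:discretization} to reduce to the case where $X\subset \S_p^m$ for some integer $m$. More precisely, given $\e\in(0,1)$, Lemma~\ref{lem:discretization} supplies a linear map $\J:X\to \S_p^m$ which distorts the $\S_p$-norm by a factor of at most $1/(1-\e)$; since every estimate in what follows depends only on $k=\dim(X)=\dim(\J X)$ and not on $m$, once we prove the theorem with $X$ replaced by $\J X\subset \S_p^m$, composition yields the bound~\eqref{cases distortion} for $X\subset \S_p$ after letting $\e\to 0$. Thus from now on I assume $X\subset \S_p^m$ with $\dim(X)=k$.

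Next, apply Lemma~\ref{lem:lewis} to produce the basis $\{T_1,\ldots,T_k\}$ of $X$ and the matrix $M=\sum_{i=1}^k T_i^*T_i\in \S_p^m$ satisfying~\eqref{eq:delta ij}. Define the linear map $\Phi:X\to \S_q^m$ by
\[
\Phi(A)\eqdef A\,M^{\frac{p-q}{2q}},
\]
where negative powers are interpreted via the convention fixed in Section~\ref{sec:main}. Note that $\ker(M)\subset \ker(A)$ for every $A\in X$, so $\Phi$ is well-defined and, in view of Lemma~\ref{eq:upper p}, it is injective. The lower bound is furnished directly by Lemma~\ref{eq:upper p}, which gives
\[
\forall\, A\in X,\qquad \|A\|_{\S_p^m}\le k^{\frac{1}{p}-\frac{1}{q}}\|\Phi(A)\|_{\S_q^m}.
\]
The upper bound is furnished by Lemma~\ref{lem:upper lewis}, which gives
\[
\forall\, A\in X,\qquad \|\Phi(A)\|_{\S_q^m}\le \max\!\left\{k^{\frac{p-2}{2}\left(\frac{1}{p}-\frac{1}{q}\right)},\,1\right\}\|A\|_{\S_p^m}.
\]

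Combining these two estimates shows that the distortion of $\Phi$ is at most $k^{\frac{1}{p}-\frac{1}{q}}\cdot\max\{k^{\frac{p-2}{2}(\frac{1}{p}-\frac{1}{q})},1\}$. When $p\in[1,2]$ the exponent $\frac{p-2}{2}(\frac{1}{p}-\frac{1}{q})$ is nonpositive (since $p\le 2$ and $p<q$), so the maximum equals $1$ and the distortion bound collapses to $k^{\frac{1}{p}-\frac{1}{q}}$. When $p\in(2,\infty)$ the exponent is positive, the maximum equals $k^{\frac{p-2}{2}(\frac{1}{p}-\frac{1}{q})}$, and the product simplifies algebraically to $k^{\frac{p}{2}(\frac{1}{p}-\frac{1}{q})}$, exactly matching~\eqref{cases distortion}. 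There is no serious obstacle here once the previous lemmas are in hand: all of the substantive work (construction of the Lewis basis, the PSD monotonicity input from Lemma~\ref{lem:1/2}, and the interpolation-type argument for $1\le p<2$ inside the proof of Lemma~\ref{lem:upper lewis}) has already been done; what remains is just identifying $\Phi(A)=AM^{(p-q)/(2q)}$ as the correct linear map and assembling the two bounds above.
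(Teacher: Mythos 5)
Your proposal is correct and is essentially identical to the paper's own proof: reduce to $\S_p^m$ via Lemma~\ref{lem:discretization}, take the Lewis basis from Lemma~\ref{lem:lewis}, define the linear map $B\mapsto (\J B)M^{\frac{p-q}{2q}}$, and combine Lemma~\ref{eq:upper p} with Lemma~\ref{lem:upper lewis}, with the same case analysis on the exponent. (Your sign of the exponent on $M$ is the one consistent with those two lemmas; the paper's displayed definition of $\Phi$ writes $M^{\frac{q-p}{2q}}$, which appears to be a typo.)
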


\begin{proof} Denote $k=\dim(X)$. Fix $\e\in (0,1)$ and continue with the notation of Lemma~\ref{lem:discretization}, thus obtaining an embedding $\mathsf{J}:X\to \S_p^m$. Apply Lemma~\ref{lem:lewis} to the linear subspace $\mathsf{J}X$ of $\S_p^m$, thus obtaining  a basis $\{T_1,\ldots,T_k\}$ of $\mathsf{J}X$. Define a linear mapping $\Phi:X\to \S_q^m$ by setting
\begin{equation}\label{eq:define Phi}
\forall\, B\in X,\qquad \Phi B \eqdef (\mathsf{J} B)M^{\frac{q-p}{2q}},
\end{equation}
where $M\in \S_p^m$ is defined in~\eqref{eq:def M}. Then,
\begin{equation}\label{eq:Phi upper}
\|\Phi B\|_{\S_q^m}\stackrel{\eqref{eq:span upper AM}\wedge \eqref{eq:define Phi}}{\le} \max\left\{k^{\frac{p-2}{2}\left(\frac{1}{p}-\frac{1}{q}\right)},1\right\}  \|\mathsf{J} B\|_{\S_p^m}\stackrel{\eqref{eq:J isomorph}}{\le}\max\left\{k^{\frac{p-2}{2}\left(\frac{1}{p}-\frac{1}{q}\right)},1\right\}\|B\|_{\S_p}.
\end{equation}
For the reverse inequality,
\begin{equation}\label{eq:Phi lower}
(1-\e)\|B\|_{\S_p}\stackrel{\eqref{eq:J isomorph}}{\le} \left\|\mathsf{J}B\right\|_{\S_p^m}
\stackrel{\eqref{eq:A upper lewis}\wedge \eqref{eq:define Phi}}{\le} k^{\frac{1}{p}-\frac{1}{q}}\|\Phi B\|_{\S_q^m}.
\end{equation}
The desired distortion bound~\eqref{cases distortion} follows by combining~\eqref{eq:Phi upper} and~\eqref{eq:Phi lower} and letting $\e\to 0$.
\end{proof}

\begin{remark}\label{rem:sharp} When $1\le p<q\le 2$ we have $\cc_{\S_q}(\ell_p^k)=k^{\frac{1}{p}-\frac{1}{q}}$ for all $k\in \N$, i.e., the  bound of Theorem~\ref{thm:embed into bigger q} is attained for $X=\ell_p^k\subset \S_p$. Indeed, in~\cite{Dix53} it was shown  that the following estimate (the $\S_p$-version of the ``easy'' Clarkson inequality~\cite{Cla36}) holds true.
\begin{equation}\label{eq:clarkson}
\forall\, A,B\in \S_q,\qquad \frac{\|A+B\|_{\S_q}^q+\|A-B\|_{\S_q}^q}{2}\le \|A\|_{\S_q}^q+\|B\|_{\S_q}^q.
\end{equation}
We shall  now explain how~\eqref{eq:clarkson} implies that in fact $\cc_{\S_q}(\ell_p^k)\ge \cc_{\S_q}(\{-1,1\}^k,\|\cdot\|_{\ell_p^k})\ge  k^{\frac{1}{p}-\frac{1}{q}}$.

Arguing similarly to~\cite[Lemma~2.1]{LN04}, given $C_1,C_2,C_3,C_4\in \S_q$, apply~\eqref{eq:clarkson} twice, once with $A=C_2-C_1$ and $B=C_1-C_4$, and once with   $A=C_2-C_3$ and $B=C_3-C_4$. We obtain
\begin{equation*}
\frac{\|C_2-C_4\|_{\S_q}^q+\|C_2-2C_1+C_4\|_{\S_q}^q}{2}\le \|C_2-C_1\|_{\S_q}^q+\|C_1-C_4\|_{\S_q}^q,
\end{equation*}
and
\begin{equation*}
\frac{\|C_2-C_4\|_{\S_q}^q+\|C_2-2C_3+C_4\|_{\S_q}^q}{2}\le \|C_2-C_3\|_{\S_q}^q+\|C_3-C_4\|_{\S_q}^q.
\end{equation*}
By summing these inequalities and using convexity we conclude that
\begin{align}\label{eq:deduce roundness}
\nonumber \|C_1-C_2\|_{\S_q}^q+&\|C_2-C_3\|_{\S_q}^q+\|C_3-C_4\|_{\S_q}^q+\|C_4-C_1\|_{\S_q}^q\\ \nonumber&\ge
\|C_2-C_4\|_{\S_q}^q+\frac{\|C_2-2C_1+C_4\|_{\S_q}^q+\|2C_3-C_2-C_4\|_{\S_q}^q}{2}\\ \nonumber
&\ge \|C_2-C_4\|_{\S_q}^q+\left\|\frac12\big((C_2+C_4-2C_1)+(2C_3-C_2-C_4)\big)\right\|_{\S_q}^q \nonumber\\
&= \|C_1-C_3\|_{\S_q}^q+\|C_2-C_4\|_{\S_q}^q.
\end{align}
Using the terminology of~\cite{Enf69}, the estimate~\eqref{eq:deduce roundness} means that $\S_q$ has roundeness $q$. A well-known iterative application of roundness $q$ (see~\cite[Proposition~3]{Enf78} or~\cite[Proposition~5.2]{Oht09}), implies (using the terminology of~\cite{BMW86}) that $\S_q$ has Enflo type $q$, i.e., every $f:\{-1,1\}^n\to \S_q$ satisfies
\begin{equation}\label{eq:state enflo type}
\sum_{\e\in \{-1,1\}^k}\|f(\e)-f(-\e)\|_{\S_q}^q\le \sum_{j=1}^k\sum_{\e\in \{-1,1\}^k} \|f(\e)-f(\e_1,\ldots,\e_{j-1},-\e_j,\e_{j+1},\ldots,\e_k)\|_{\S_q}^q.
\end{equation}
So, if $\|\e-\e'\|_{\ell_p^k}\le \|f(\e)-f(\e')\|_{\S_q}\le\alpha\|\e-\e'\|_{\ell_p^k}$ for some $\alpha\in (0,\infty)$ and all $\e,\e'\in \{-1,1\}^k$, then by substituting these bounds into~\eqref{eq:state enflo type} we have $2^{k}(2k)^{q/p}\le k2^k(2\alpha)^q$. Thus necessarily $\alpha\ge k^{1/p-1/q}$.

When $2<p<q<\infty$ it remains an interesting open problem to determine the asymptotic behavior of the best possible upper bound in~\eqref{cases distortion}. The best known lower bounds in this context follow from the fact that $\S_q$ is an $X_q$ Banach space, as proved in~\cite{NS16}. If one wishes to obtain a discrete version of this lower bound as we did above, then the best known estimates follow from the $\S_q$-version of~\cite{Nao16} (which is not proved in~\cite{Nao16}  but is stated there: This statement was checked by the first named author in collaboration with A. Eskenazis and will appear elsewhere).
\end{remark}

\begin{proof}[Deduction of Theorem~\ref{thm:markov convexity} from Theorem~\ref{thm:embed into bigger q}] Following~\cite{Pis86}, the {\em martingale cotype $2$ constant} of a Banach space $(X,\|\cdot\|_X)$, denoted $\mathfrak{m}_2(X)$, is the infimum over those $\mathfrak{m}\in (0,\infty]$ such that for every probability space $(\mathscr{S},\mathscr{F},\mu)$, every martingale $\{M_k\}_{k=1}^\infty\subset L_2(\mu;X)$ satisfies
\begin{equation}\label{eq:def martingale cotype}
\sum_{k=1}^\infty \int_{\mathscr{S}}\|M_{k+1}-M_k\|_{X}^2\ud \mu \le \mathfrak{m}^2 \sup_{k\in \N} \int_{\mathscr{S}}\|M_k\|_{X}^2\ud \mu.
\end{equation}
Recalling the definition of the Markov 2-convexity constant $\Pi_2(X)$ of $X$ as presented in Section~\ref{sec:markov},  by combining~\cite{LNP06} and~\cite{MN08} we have $\mathfrak{m}_2(X)\asymp \Pi_2(X)$. Next, following~\cite{Bal92}, the $2$-convexity constant of a Banach space $X$, denoted $\mathscr{K}_2(X)$, is the infimum over those $K\in (0,\infty]$ such that
$$
\forall\, x,y\in X,\qquad 2\|x\|_X^2+\frac{2}{K^2}\|y\|_X^2\le \|x+y\|^2_X+\|x-y\|_X^2.
$$
It follows from~\cite{Pis75} that $\mathfrak{m}_2(X)\lesssim \mathscr{K}_2(X)$, and it was proved in~\cite{Bal92} that  the implicit constant in this inequality can be taken to be $1$, i.e., $\mathfrak{m}_2(X)\le \mathscr{K}_2(X)$. By~\cite{BCL94}, for $q\in (1,2]$ we have $\mathscr{K}_2(X)\le 1/\sqrt{q-1}$. Thus also $\mathfrak{m}_2(\S_q)\le 1/\sqrt{q-1}$ (in~\cite{Tom74} a weaker bound on $\mathfrak{m}_2(\S_q)$ was obtained that grows like $1/(q-1)^{O(1)}$ as $q\to 1$, which would suffice to derive an impossibility result for dimension reduction in $\S_1$ as in Theorem~\ref{thm:qs intro}, though with a worse dependence on $\alpha$ in the exponent).

By Theorem~\ref{thm:embed into bigger q}, for every finite-dimensional subspace $X$ of $\S_1$ and every  $q\in (1,2]$ we have
\begin{equation}\label{eq:to optimize q}
\Pi_2(X)\le \Pi_2(\S_q) \dim(X)^{1-\frac{1}{q}}\asymp \mathfrak{m}_2(\S_q)\dim(X)^{1-\frac{1}{q}}\le \frac{\dim(X)^{1-\frac{1}{q}}}{\sqrt{q-1}}.
\end{equation}
By optimizing over $q$ in~\eqref{eq:to optimize q} we thus complete the proof of Theorem~\ref{thm:markov convexity}.\end{proof}

\begin{remark}\label{rem:schauder} As promised earlier, we shall justify Lemma~\ref{lem:discretization} here. This amounts to a natural truncation argument which is included for the sake of completeness. Fix $\e\in (0,1)$ and let $\NN_{\e/2}$ be an $(\e/2)$-net in the unit sphere of $X$. For every $B\in \NN_{\e/2}$ let $\{f_j(B)\}_{j=1}^\infty, \{g_j(B)\}_{j=1}^\infty\subset \ell_2$ be  orthonormal eigenbases of $B^*B$ and $BB^*$, respectively, such that $B^*Bf_j(B)=\sigma_j(B)^2f_j(B)$ and $BB^*g_j(B)=\sigma_j(B)^2g_j(B)$. Also, fix $m(B)\in \N$ such that
\begin{equation}\label{eq:eps/3}
\bigg(\sum_{j=m(B)+1}^\infty \sigma_j(B)^p\bigg)\le \frac{\e}{4}.
\end{equation}
Consider the following linear subspaces of $\ell_2$
$$
F(B)\eqdef \spn\left(\{f_1(B),\ldots,f_{m(B)}(B)\}\right)\qquad\mathrm{and}\qquad G(B)\eqdef \spn\left(\{g_1(B),\ldots,g_{m(B)}(B)\}\right),
$$
and define
$$
V\eqdef \sum_{B\in \NN_{\frac{\e}{2}}}\big(F(B)+G(B)\big).
$$
Since $X$ is finite-dimensional, its unit sphere is compact and therefore $|\NN_{\e/2}|<\infty$. It follows that $V$ is a finite-dimensional subspace of $\ell_2$, so denote $m=\dim(V)\in \N$.

Observe that for every $B\in \NN_{\e/2}$, since $g_1(B),\ldots,g_{m(B)}(B)$ are eigenvectors of $BB^*$, the nonzero diagonal entries of the diagonalization of  $(\proj_{G(B)^\perp}B)(\proj_{G(B)^\perp}B)^*=\proj_{G(B)^\perp}BB^*\proj_{G(B)^\perp}$ coincide with the nonzero elements of $\{\sigma_j(B)^2\}_{j=m(B)+1}^\infty$. Hence,
\begin{equation}\label{proj GB}
\left\|\proj_{G(B)^\perp}B\right\|_{\S_p}=\bigg(\sum_{j=m(B)+1}^\infty\sigma_j(B)^p\bigg)^{\frac{1}{p}}\stackrel{\eqref{eq:eps/3}}{\le} \frac{\e}{4}.
\end{equation}
Since $G(B)\subset V$, we have $V^\perp\subset G(B)^\perp$, and therefore $\proj_{V^{\perp}}=\proj_{V^{\perp}}\proj_{G(B)^\perp}$. Consequently,
\begin{equation}\label{eq:multiplication on left proj}
\left\|\proj_{V^\perp}B\right\|_{\S_p}=\left\|\proj_{V^\perp}\proj_{G(B)^\perp}B\right\|_{\S_p} \stackrel{\eqref{eq:holder schatten}}{\le} \left\|\proj_{V^\perp}\right\|_{\S_\infty}\left\|\proj_{G(B)^\perp}B\right\|_{\S_p}  \stackrel{\eqref{proj GB}}{\le}\frac{\e}{4}.
\end{equation}
The analogous reasoning using the fact that $F(B)\subset V$ shows that also
\begin{equation}\label{eq:multiplication on right proj}
\left\|B\proj_{V^\perp}\right\|_{\S_p}\le \frac{\e}{4}.
\end{equation}
Hence, since $B-\proj_VB\proj_V=(\Id_{\ell_2}-\proj_V)B+\proj_VB(\Id_{\ell_2}-\proj_V)=\proj_{V^\perp}B+\proj_VB\proj_{V^\perp}$, where $\Id_{\ell_2}:\ell_2\to \ell_2$ is the identity mapping, by the triangle inequality in $\S_p$ we see that
\begin{multline}\label{eq:esp over 2 approx}
\left\|B-\proj_VB\proj_V\right\|_{\S_p}\le \left\|\proj_{V^\perp}B\right\|_{\S_p}+\left\|\proj_VB\proj_{V^\perp}\right\|_{\S_p}\\\stackrel{\eqref{eq:holder schatten}\wedge \eqref{eq:multiplication on left proj}}{\le} \frac{\e}{4}+\left\|\proj_V\right\|_{\S_\infty}\left\|B\proj_{V^\perp}\right\|_{\S_p}\stackrel{\eqref{eq:multiplication on right proj}}{\le}\frac{\e}{2}.
\end{multline}

Now, take any $A\in X$ with $\|A\|_{S_p}=1$. Observe that
\begin{equation}\label{eq:upper PAP}
\left\|\proj_VA\proj_V\right\|_{\S_p}\stackrel{\eqref{eq:holder schatten}}{\le} \left\|\proj_V\right\|_{\S_\infty}\left\|A\right\|_{\S_p}\left\|\proj_V\right\|_{\S_\infty}\le 1.
\end{equation}
Since $\NN_{\e/2}$ is an $(\e/2)$-net of the unit sphere of $X$, there is $B\in \NN_{\e/2}$ with $\|A-B\|_{\S_p}\le \e/2$. Then,
\begin{multline}\label{eq:lower PAP}
\left\|\proj_VA\proj_V\right\|_{\S_p}\ge \|B\|_{\S_p}-\left\|B-\proj_VB\proj_V\right\|_{\S_p}-\left\|\proj_V(A-B)\proj_V\right\|_{\S_p}\\\stackrel{\eqref{eq:holder schatten}\wedge \eqref{eq:esp over 2 approx}}{\ge}
1- \frac{\e}{2}-\left\|\proj_V\right\|_{\S_\infty}\left\|A-B\right\|_{\S_p}\left\|\proj_V\right\|_{\S_\infty}\ge 1-\e.
\end{multline}
Hence, if we fix an arbitrary isomorphism  $\mathscr{T}:\R^m\to V$ (recall that $V$ is $m$-dimensional) and define $\mathsf{J}A=\mathscr{T}^{-1}\proj_VA\proj_V\mathscr{T}:\R^m\to \R^m$, then the desired conclusion~\eqref{eq:J isomorph} follows from~\eqref{eq:upper PAP} and~\eqref{eq:lower PAP}.
\end{remark}

%\cite{JN09,ACNNH11,Reg13,LNP06,LN04,LMN04,BC03,MN08,KN05,JL84,BKL07,vN37,BCL94,Bha97,Tom79,Lew78,LP01,NR02,GNRS99,Nik07,ANN17,LN14,Sch16,FLM77,Pis75,MN04,Laa00,Sch87,BLM89,Tal90,Tal95,SZ01,Sch11,
%Mil85,Dil84,Gro99,Why58,BJLPS99,Jam90,LW16,LNW14,AKR15,EMN17,LM13,HLM12,DTV11,Pis78,BL00,LW17,HMS11,Win04,And10,LSG12,CS02,Pis86,KLMN04,Bal92,Li16}

%\bibliographystyle{alphaabbrvprelim}
\bibliographystyle{abbrv}
\bibliography{nuclear-dimension-conf}

 \end{document}